\newtheorem{theorem}{Theorem}
\theoremstyle{plain}
\newtheorem{corollary}{Corollary}
\newtheorem{definition}{Definition}
\newtheorem{example}{Example}
\newtheorem{lemma}{Lemma}
\newtheorem{remark}{Remark}
\numberwithin{equation}{section}
\begin{document}
\title {On 1-absorbing primary ideal of a commutative ring (Correction to Theorem 17 is added)}
\author{Ayman Badawi}
\address{Department of Mathematics \&Statistics, The American University of
Sharjah, P.O. Box 26666, Sharjah, United Arab Emirates.}
\email{abadawi@aus.edu}
\author{Ece Yetkin Celikel}
\address{Department of Mathematics, Faculty of Science and Art, Gaziantep
University, Gaziantep, Turkey}
\email{yetkinece@gmail.com}

\subjclass[2000]{Primary 13A15, 13F05; Secondary 05A15, 13G05}
\keywords{prime ideal, primary ideal, 1-absorbing primary ideal, 2-absorbing primary ideal, 2-absorbing ideal, weakly prime, weakly primary ideal, weakly 2-absorbing primary ideal, weakly semiprime, n-absorbing ideal}

\begin{abstract}
Let $R$ be a commutative ring with nonzero identity. In this paper, we
introduce the concept of 1-absorbing primary ideals in commutative rings. A
proper ideal $I$ of $R$ is called a {\it $1$-absorbing primary} ideal of $R$ if whenever nonunit elements
$a,b,c\in R$ and $abc\in I$, then $ab\in I$ or $c\in \sqrt{I}.$ Some
properties of 1-absorbing primary ideals are investigated. For example, we show that if $R$ admits a 1-absorbing primary ideal that is not a primary ideal, then $R$ is a quasilocal ring. We give an example of a 1-absorbing primary ideal of $R$ that is not a primary ideal of $R$. We show that if a ring $R$ is not a quasilocal, then a proper ideal $I$ of $R$ is a 1-absorbing primary ideal of $R$ if and only if $I$ is a primary ideal. We show that if $R$ is a Noetherian domain, then $R$ is a Dedekind domain if and only if every nonzero proper 1-absorbing primary ideal of $R$ is of the form $P^n$ for some nonzero prime ideal $P$ of $R$ and a positive integer $n \geq 1$. We show that a proper ideal $I$ of $R$ is a 1-absorbing primary ideal of $R$ if and only if  whenever $I_{1}I_{2}I_{3}\subseteq I$ for some proper ideals $I_{1},I_{2},I_{3}$ of $R$, then $I_{1}I_{2}\subseteq I$ or $I_{3}\subseteq \sqrt{I}.$
\end{abstract}

\maketitle

\section{Introduction}

Throughout this paper all rings are commutative with $1\neq 0.$ Let $R$ be a
commutative ring. If $R$ has exactly one maximal ideal, then $R$ is called a {\it quasilocal} ring. An ideal $I$ of $R$ is said to be proper if $I\neq R$. Let $I$ be a proper ideal of a commutative ring $R$. Then
the radical of $I$ is denoted by $\sqrt{I}=\{r\in R \mid r^{n}\in I$ for some positive integer $n \geq 1 \}$ and the set of zero divisor elements with respect to $I$ is denoted by $Z_{I}(R)=\{r\in R \mid $ $rs\in I$ for some $s\in R\backslash I\}$.

Since prime ideals have an important role in the theory of commutative
rings, there are several ways to generalize the concept of prime ideals.
Badawi generalized the concept of prime ideals in \cite{Badawi}. We recall from \cite{Badawi} that
a nonzero proper ideal $I$ of $R$ is said to be a {\it 2-absorbing}  ideal of $R$
if whenever $a,b,c\in R$ and $abc\in I$, then either $ab\in I$ or $ac\in I$
or $bc\in I$. Anderson and Badawi \cite{AndBad} generalized the notion of
2-absorbing ideals to $n$-absorbing ideals. A proper ideal $I$ of $R$ is
called $n$-absorbing ideal if whenever $x_{1}\cdot \cdot \cdot x_{n+1}\in I$
for $x_{1},...,x_{n+1}\in R$, then there are $n$ of the $x_{i}$'s whose
product is in $I$.  Recall from \cite{Badawi2}
that a proper ideal $I$ of $R$ is called a {\it 2-absorbing primary} ideal of $R$ if
whenever $a,b,c\in R$ with $abc\in I$, then $ab\in I$ or $ac\in \sqrt{I}$ or
$bc\in \sqrt{I}$.

In this paper, we introduce the concept of $1$-absorbing primary ideals of commutative rings . A
proper ideal $I$ of a commutative ring $R$ is called a {\it $1$-absorbing primary} ideal of $R$ if whenever nonunit elements
$a,b,c\in R$ and $abc\in I$, then $ab\in I$ or $c\in \sqrt{I}$. We show that the following implications hold and none of them is revisable:

primary ideal $\Longrightarrow$ 1-absorbing primary ideal $\Longrightarrow$  2-absorbing primary ideal.

Among many results in this paper. We give an example (Example \ref{e1}) of a 1-absorbing primary ideal of $R$ that is not a primary ideal of $R$, and another example (Example \ref{e2}) of a 2-absorbing primary ideal of $R$ that is not a 1-absorbing primary ideal of $R$. We show (Theorem \ref{T-1}) that if $I$ is a 1-absorbing primary ideal of $R$, then $\sqrt{I}$ is a prime ideal of $R$. We show (Theorem \ref{T0}) if a ring $R$ admits a 1-absorbing primary ideal of $R$ that is not a primary ideal, then $R$ is a quasilocal ring. We give a method (Theorem \ref{T2}) to construct 1-absorbing primary ideals of commutative rings that are not primary ideals. We show (Theorem \ref{T3}) that if a ring $R$ is not a quasilocal, then a proper ideal $I$ of $R$ is a 1-absorbing primary ideal of $R$ if and only if $I$ is a primary ideal. We show (Theorem \ref{T12}) that if $R$ is a Noetherian domain, then $R$ is a Dedekind domain if and only if every nonzero proper 1-absorbing primary ideal of $R$ is of the form $P^n$ for some nonzero prime ideal $P$ of $R$ and a positive integer $n \geq 1$. We show (Theorem \ref{T17}) that a proper ideal $I$ of $R$ is a 1-absorbing primary ideal of $R$ if and only if  whenever $I_{1}I_{2}I_{3}\subseteq I$ for some proper ideals $I_{1},I_{2},I_{3}$ of $R$, then $I_{1}I_{2}\subseteq I$ or $I_{3}\subseteq \sqrt{I}.$

For any undefined terminology see \cite{Gilmer}, \cite{Huc}, \cite{Kap}, and \cite{LM}.

\section{Properties of 1-absorbing primary ideals}
We  remind the reader with the following definitions.
\begin{definition}
Let $I$ be a proper ideal of a commutative ring $R$.
\begin{enumerate}
\item We call $I$ a {\it $1$-absorbing primary} ideal of $R$ if whenever nonunit elements
$a,b,c\in R$ and $abc\in I$, then $ab\in I$ or $c\in \sqrt{I}$.
\item (\cite{Badawi2}) We call $I$ a {\it 2-absorbing primary} ideal of $R$ if
whenever $a,b,c\in R$ with $abc\in I$, then $ab\in I$ or $ac\in \sqrt{I}$ or
$bc\in \sqrt{I}$.
\end{enumerate}
\end{definition}

We start with the following trivial result, and hence we omit its proof.

\begin{theorem}
Let $I$ be a proper ideal of $R$. Then
\begin{enumerate}
\item If $I$ is a primary ideal of $R$, then $I$ is a 1-absorbing primary ideal of $R$.
\item If $I$ is a 1-absorbing primary ideal of $R$, then $I$ is a 2-absorbing primary ideal of $R$.
\end{enumerate}
\end{theorem}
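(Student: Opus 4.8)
The plan is to verify each implication directly from the definitions, handling the two parts separately; neither should require more than a line or two of reasoning. For part (1), I would note that the $1$-absorbing primary condition is nothing but the primary condition applied to the factorization $abc = (ab)c$. So suppose $a,b,c$ are nonunit elements of $R$ with $abc \in I$; writing $x = ab$ and $y = c$ we have $xy \in I$, and since $I$ is primary either $ab = x \in I$ or $c = y \in \sqrt{I}$, which is exactly the conclusion wanted. In fact the nonunit hypotheses are never used, so a primary ideal satisfies a formally stronger statement.

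For part (2), the approach is a brief case split according to whether $a$, $b$, $c$ are units. Let $a,b,c \in R$ with $abc \in I$. If all three are nonunits, the $1$-absorbing primary hypothesis gives $ab \in I$ or $c \in \sqrt{I}$; in the latter case, $\sqrt{I}$ being an ideal yields $ac \in \sqrt{I}$, so one of the three alternatives in the definition of $2$-absorbing primary holds. If on the other hand at least one of $a,b,c$ is a unit, then cancelling it shows that one of $ab$, $ac$, $bc$ in fact lies in $I$ (if $c$ is a unit then $ab \in I$; if $a$ is a unit then $bc \in I$; if $b$ is a unit then $ac \in I$), and $I \subseteq \sqrt{I}$ finishes it. Thus $I$ is $2$-absorbing primary in every case.

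I expect no genuine obstacle here; the only point needing a moment's care is remembering in part (2) that the $1$-absorbing primary axiom constrains only products of three \emph{nonunit} elements, so a unit among $a,b,c$ has to be dispatched by a separate (and even easier) argument. This is presumably why the authors record it as a trivial result and omit the proof.
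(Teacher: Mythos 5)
Your proof is correct and is exactly the routine verification the paper deems trivial and omits: part (1) is the primary condition applied to the factorization $abc=(ab)c$, and part (2) follows since $c\in\sqrt{I}$ forces $ac\in\sqrt{I}$, with units handled by cancellation. You rightly flag the one subtle point, namely that the $1$-absorbing primary hypothesis only speaks about triples of nonunits, so the unit cases in (2) genuinely need the separate (easy) argument you give.
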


The following is an example of a 1-absorbing primary ideal that is not a primary ideal.

\begin{example}
\label{e1}Let $A=K[x,y]$, where $K$ is a field, $M = (x, y)A$, and $R = A_M$. Note that $R$ is a quasilocal ring with maximal ideal $M_M$.  Then $I = xM_M =  (x^{2}, xy)R$ is a
1-absorbing primary ideal of $R$ (see Theorem \ref{T2}) and $\sqrt{I} = xR$. However $xy \in I$, but neither $x \in I$ nor $y \in \sqrt{I}$.  Thus $I$ is not a primary ideal of $R$.
\end{example}

The following is an example of a 2-absorbing primary ideal that is not a 1-absorbing primary ideal.

\begin{example}\label{e2}
 Let $R = \mathbb{Z}$. Consider the ideal $I = 12R$. Then $I$ is a 2-absorbing primary ideal of $R$  by Corollary 2.12 in \cite{Badawi2}. However $2\cdot 2\cdot 3\in I$, but neither $2\cdot 2\in I$ nor  $3\in
\sqrt{I}$. Thus $I$ is not a 1-absorbing primary ideal of $R$.
\end{example}

\begin{theorem}\label{T-1}
Let $I$ be a 1-absorbing primary ideal of a ring $R$. Then $\sqrt{I}$ is a prime ideal of $R$.
\end{theorem}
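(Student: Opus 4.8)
The plan is to show $\sqrt{I}$ is prime by taking $x, y \in R$ with $xy \in \sqrt{I}$ but $y \notin \sqrt{I}$, and proving $x \in \sqrt{I}$. Since $xy \in \sqrt{I}$, there is a positive integer $n$ with $(xy)^n = x^n y^n \in I$. The idea is to feed the three nonunit elements $x^n$, $y^{n-1}$, $y$ (or a similar grouping) into the defining property of $1$-absorbing primary ideals. First I would dispose of the trivial cases: if $x$ is a unit then $x \in \sqrt{I}$ is false in general, so I need to handle units carefully — but note that if $x$ is a unit, then from $xy \in \sqrt{I}$ we get $y \in \sqrt{I}$, contradicting our assumption, so $x$ is a nonunit. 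Similarly $y$ is a nonunit (a unit $y$ would force $x \in \sqrt{I}$ trivially, which would be fine, but in fact $xy\in\sqrt I$ with $y$ a unit gives $x\in\sqrt I$ directly). So I may assume $x$ and $y$ are both nonunits, and I should also note that if $n = 1$ then $xy \in I \subseteq \sqrt{I}$ and I can replace $n$ by $2$ if needed so that $y^{n-1}$ makes sense as a genuine factor; more simply, take $n \geq 2$ without loss of generality (replace $n$ by $\max(n,2)$).

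Next, with $n \geq 2$, write $x^n y^n = (x^n)(y^{n-1})(y) \in I$, where all three factors $x^n$, $y^{n-1}$, $y$ are nonunits (powers of nonunits are nonunits). Apply the $1$-absorbing primary hypothesis: either $x^n y^{n-1} \in I$ or $y \in \sqrt{I}$. The second alternative contradicts $y \notin \sqrt{I}$, so $x^n y^{n-1} \in I \subseteq \sqrt{I}$. Now I iterate: $x^n y^{n-1} \in \sqrt{I}$ means some power $(x^n y^{n-1})^m \in I$; again group this as a product of three nonunits with the "extra" $y$-power isolated, apply the hypothesis, rule out the radical-of-$y$ branch, and conclude a smaller power of $y$ multiplied by a power of $x$ lies in $\sqrt{I}$. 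Repeating this finitely many times strips away all the $y$'s and yields $x^k \in \sqrt{I}$ for some $k \geq 1$, hence $x \in \sqrt{I}$.

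A cleaner way to organize the induction, which I would probably use to avoid the messy bookkeeping of exponents, is this: prove by induction on $t \geq 1$ the statement ``if $x^n y^t \in \sqrt{I}$ and $y \notin \sqrt{I}$, then $x \in \sqrt{I}$.'' The base case $t$ essentially reduces, after raising to a power, to an application of the defining property with the grouping $(x^{\cdot})(y^{\cdot})(y) \in I$; the inductive step does the same, peeling off one factor of $y$ at a time. The main obstacle, and the only place requiring genuine care, is the handling of units: I must make sure that at every application of the $1$-absorbing primary property the three elements I plug in are honestly nonunits, and that the degenerate cases ($x$ or $y$ a unit, or an exponent dropping to $0$) are either ruled out or handled separately. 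Once that is pinned down, the argument is a straightforward finite descent on the power of $y$. I expect the write-up to be short, two displayed computations plus the unit bookkeeping.
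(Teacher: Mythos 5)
Your argument is correct in substance but organized quite differently from the paper's, which makes the whole thing a \emph{one-shot} application of the definition: choose an even exponent $n = 2m$ with $(xy)^n \in I$ and group $x^n y^n = (x^m)(x^m)(y^n)$, a product of three nonunits. The definition then gives $x^m x^m = x^n \in I$ or $y^n \in \sqrt{I}$, i.e.\ $x \in \sqrt{I}$ or $y \in \sqrt{I}$, with no iteration at all. Your grouping $(x^n)(y^{n-1})(y)$ isolates only one copy of $y$ in the third slot, which is why you are forced into a finite descent peeling off the $y$'s one at a time; the paper's grouping puts all the $y$'s in the third slot and splits all the $x$'s between the first two (the evenness of $n$ is exactly what allows $x^n = x^m\cdot x^m$ with both factors nonunits), so the dichotomy falls out immediately. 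Your descent does work, but only if you run it on membership in $I$ rather than in $\sqrt{I}$: the first branch of the definition hands you $x^n y^{n-1} \in I$ outright, and from $x^N y^T \in I$ with $T \geq 2$ the grouping $(x^N)(y^{T-1})(y)$ strictly decreases $T$ while staying in $I$, terminating at $x^N y \in I$, whence $(x)(x^{N-1})(y) \in I$ gives $x^N \in I$ and $x \in \sqrt{I}$. The one step to drop is your detour ``$x^n y^{n-1} \in \sqrt{I}$ means some power $(x^n y^{n-1})^m \in I$'': re-raising to a power multiplies the $y$-exponent (from $n-1$ to $(n-1)m$), so the ``cleaner'' induction on $t$ with hypothesis $x^n y^t \in \sqrt{I}$ does not obviously terminate as phrased. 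Keep the descent inside $I$ and your proof is complete; the paper's version is simply shorter.
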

\begin{proof}
Let $xy \in \sqrt{I}$ for some $x, y \in R$. We may assume that $x, y$ are nonunit elements of $R$. Let $n \geq 2$ be an even positive integer such that $(xy)^n \in I$. Then $n = 2m$ for some positive integer $m \geq 1$. Since $(xy)^n = x^ny^n = x^mx^my^n \in I$ and $I$ is a 1-absorbing primary ideal of $R$, we conclude that $x^mx^m = x^n \in I$ or $y^n \in I$. Hence $x \in \sqrt{I}$ or $y \in \sqrt{I}$. Thus $\sqrt{I}$ is a prime ideal of $R$.
\end{proof}

The following lemma is needed in the proof of our next result.

\begin{lemma}\label{L0}
Let $R$ be a ring. Suppose that for every nonunit element $w$ of $R$ and for every unit element $u$ of $R$, we have $ w + u$ is a unit element of $R$. Then $R$ is a quasilocal ring.
\end{lemma}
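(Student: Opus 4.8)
The plan is to show that the set $N$ of all nonunit elements of $R$ is an ideal of $R$; once that is done, $R$ is quasilocal for a standard reason, namely that every proper ideal of $R$ consists of nonunits and hence is contained in $N$, so $N$ is the unique maximal ideal.

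First I would dispose of the easy half. I claim $N$ is closed under multiplication by arbitrary elements of $R$: if $w\in N$ and $r\in R$ with $rw$ a unit, say $s(rw)=1$, then $(sr)w=1$, so $w$ would be a unit, a contradiction. In particular, taking $r=-1$, we get $-w\in N$ whenever $w\in N$; I would isolate this observation now since it is exactly what makes the next step go through. Also $0\in N$ because $1\neq 0$.

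The main step is additive closure. Given $w_1,w_2\in N$, suppose toward a contradiction that $w_1+w_2=u$ is a unit. Then $w_1=u+(-w_2)$. Since $-w_2$ is a nonunit (by the previous paragraph) and $u$ is a unit, the hypothesis of the lemma applied to the pair $(-w_2,u)$ yields that $u+(-w_2)=w_1$ is a unit, contradicting $w_1\in N$. Hence $w_1+w_2\in N$, and combined with the multiplicative closure above, $N$ is an ideal.

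Finally I would conclude: $N\neq R$ since $1\notin N$, so $N$ is a proper ideal; and if $M$ is any maximal ideal of $R$, then $M$ consists of nonunits, so $M\subseteq N\subsetneq R$, forcing $M=N$ by maximality of $M$. Thus $R$ has exactly one maximal ideal and is quasilocal. I do not expect a genuine obstacle here; the only subtlety is to apply the hypothesis to the pair $(-w_2,u)$ rather than to $(w_2,u)$, which is why the remark $-w_2\in N$ is worth stating explicitly before the additive-closure argument.
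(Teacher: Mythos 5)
Your proof is correct. It takes a different overall route from the paper's: the paper argues directly by contradiction, supposing $R$ has two distinct maximal ideals $M_1,M_2$, writing $m_1+m_2=1$ with $m_i\in M_i$, and then observing that $m_2=1-m_1=(-m_1)+1$ is a unit by the hypothesis (nonunit plus unit), contradicting $m_2\in M_2$. You instead prove the stronger intermediate fact that the set $N$ of nonunits is an ideal and invoke the standard criterion that this forces $R$ to be quasilocal. The key use of the hypothesis is the same in both arguments --- rewriting a unit as (negative of a nonunit) plus a unit, exactly the subtlety you flag about applying the hypothesis to $(-w_2,u)$ rather than $(w_2,u)$ --- but your version is more self-contained: it does not rely on the comaximality of distinct maximal ideals, only on the facts that maximal ideals consist of nonunits and that a maximal ideal exists. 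The cost is a slightly longer write-up; the paper's argument is a three-line contradiction. Both are complete and correct.
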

\begin{proof}
Suppose that $R$ has at least two maximal ideals, say $M_1, M_2$. Then $m_1 + m_2 = 1$ for some $m_1\in M_1$ and $m_2 \in M$. Thus $1 - m_1 = m_2$  is a unit element of
$R$, which is impossible since $m_2 \in M_2$. Thus $R$ is a quasilocal ring.
\end{proof}

In the following result, we show that if a ring $R$ admits a 1-absorbing primary ideal that is not a primary ideal, then $R$ is a quasilocal ring.

\begin{theorem}\label{T0}
Suppose that a ring $R$ admits a 1-absorbing primary ideal that is not a primary ideal. Then $R$ is a quasilocal ring.
\end{theorem}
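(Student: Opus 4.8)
The plan is to reduce everything to Lemma \ref{L0}: it suffices to show that for every nonunit $w$ and every unit $u$ of $R$, the element $w+u$ is again a unit of $R$. First I would unpack the hypothesis. Since $R$ admits a 1-absorbing primary ideal $I$ that is not primary, there exist $a,b\in R$ with $ab\in I$ but $a\notin I$ and $b\notin\sqrt{I}$. A quick check shows that both $a$ and $b$ must be nonunits: if $a$ were a unit then $b=a^{-1}(ab)\in I\subseteq\sqrt{I}$, and if $b$ were a unit then $a=b^{-1}(ab)\in I$, each contradicting the choice of $a,b$.

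The crux of the argument is the claim that $aw\in I$ for every nonunit $w\in R$. To see this, note $awb=w(ab)\in I$; applying the 1-absorbing primary property to the three nonunit elements $a,w,b$ gives $aw\in I$ or $b\in\sqrt{I}$, and the latter is impossible, so $aw\in I$.

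With this claim established, the rest is short. Given a nonunit $w$ and a unit $u$, suppose toward a contradiction that $w+u$ is a nonunit. Applying the claim to $w$ and to $w+u$ yields $aw\in I$ and $a(w+u)\in I$, hence $au=a(w+u)-aw\in I$, and multiplying by $u^{-1}$ forces $a\in I$, a contradiction. Therefore $w+u$ is a unit, and Lemma \ref{L0} shows $R$ is quasilocal.

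The step I would watch most carefully is the application of the 1-absorbing primary hypothesis, since that definition only constrains products of \emph{nonunit} elements; this is precisely why one must first verify that $a$ and $b$ are nonunits and why it is enough to treat nonunit $w$ (which is exactly the situation in Lemma \ref{L0}). Beyond that point the proof is just elementary ideal arithmetic, so I do not anticipate any further obstacle.
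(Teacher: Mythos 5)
Your proof is correct and follows essentially the same route as the paper: establish $aw\in I$ for every nonunit $w$ via the 1-absorbing primary property, then show $w+u$ must be a unit and invoke Lemma \ref{L0}. The only (welcome) addition is your explicit verification that the witnesses $a,b$ to non-primariness are nonunits, which the paper simply asserts.
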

\begin{proof}
Suppose that $I$ is a 1-absorbing primary ideal of $R$ that is not a primary ideal of $R$. Hence there exist nonunit elements $x, y \in R$ such that neither $x \in I$ nor $y \in \sqrt{I}$. Let $w$ be a nonunit element of $R$. Since $wxy \in I$ and $I$ is a 1-absorbing primary ideal of $R$ and $y \notin \sqrt{I}$, we conclude that $wx \in I$. Let $u$ be a unit element of $R$. Suppose that $w + u$ is a nonunit element of $R$. Since $(w + u)xy \in I$ and $I$ is a 1-absorbing primary ideal of $R$ and $y \notin \sqrt{I}$, we conclude that $(w + u)x = wx + ux \in I$. Since $wx \in I$, we conclude that $x\in I$, which is a contradiction. Thus $w + u$ is a unit element of $R$. Since for every nonunit element $w$ of $R$ and for every unit element $u$ of $R$, we have $ w + u$ is a unit element of $R$, we conclude that $R$ is a quasilocal ring by Lemma \ref{L0}.
\end{proof}

\begin{theorem} \label{T1}
Suppose that a ring $R$ is not a quasilocal ring. Then a proper ideal $I$ of $R$ is a 1-absorbing primary ideal of $R$ if and only if $I$ is a primary ideal of $R$. In particular, if $R = R_1 \times R_2$ for some rings $R_1$ and $R_2$, then a proper ideal $I$ of $R$ is a 1-absorbing primary ideal of $R$ if and only if $I$ is a primary ideal of $R$.
\end{theorem}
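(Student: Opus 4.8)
The plan is to prove the first statement; the "in particular" clause follows immediately since a nontrivial product $R = R_1 \times R_2$ has at least two maximal ideals and hence is not quasilocal. One direction is free: every primary ideal is a 1-absorbing primary ideal by Theorem 1(1), and this requires no hypothesis on $R$. So the content is the forward direction: assuming $R$ is not quasilocal, I would show that a 1-absorbing primary ideal $I$ of $R$ must be primary.

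I would argue by contraposition against Theorem \ref{T0}. Suppose $I$ is a 1-absorbing primary ideal of $R$ that is \emph{not} primary. Then Theorem \ref{T0} tells us directly that $R$ is a quasilocal ring, contradicting the hypothesis. Hence $I$ must be a primary ideal of $R$. This is the whole argument — the real work was already done in Theorem \ref{T0}, and this theorem is essentially a restatement of its contrapositive together with the trivial implication from Theorem 1(1).

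For the "in particular" part, I would simply note that if $R = R_1 \times R_2$ with $1 \neq 0$ in each factor, then $M_1 \times R_2$ and $R_1 \times M_2$ (for any maximal ideals $M_i$ of $R_i$) are distinct maximal ideals of $R$, so $R$ is not quasilocal, and the general statement applies. There is no real obstacle here: the only mild subtlety is making sure the hypothesis "$R$ is a ring" in the paper's convention forces $1 \neq 0$ in each $R_i$, so that each factor genuinely contributes a maximal ideal; this is automatic since in a product ring $R_1 \times R_2$ with identity $(1,1) \neq (0,0)$, at least one factor is nonzero, and in fact for the two maximal ideals to be distinct one uses that if, say, $R_2 = 0$ then $R = R_1$ and the decomposition is trivial, which we exclude. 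So the genuinely new mathematical step is essentially zero; the theorem is a clean corollary of Theorem \ref{T0}.
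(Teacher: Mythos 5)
Your proposal is correct, but it takes a shorter route than the paper does. You derive the forward direction as the contrapositive of Theorem \ref{T0}: if $I$ were 1-absorbing primary and not primary, Theorem \ref{T0} would force $R$ to be quasilocal, a contradiction. Since Theorem \ref{T0} is stated and proved before Theorem \ref{T1} (using only Lemma \ref{L0}), there is no circularity, and combined with the trivial converse from Theorem 1(1) this settles the first statement. The paper instead gives a self-contained direct proof: it takes $xy \in I$ with $y \notin \sqrt{I}$, invokes the contrapositive of Lemma \ref{L0} to produce a nonunit $w$ and a unit $u$ with $w+u$ nonunit, and then runs the same computation that appears inside the proof of Theorem \ref{T0} (from $wxy \in I$ and $(w+u)xy \in I$ conclude $wx \in I$ and $wx + ux \in I$, hence $x \in I$). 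So the mathematical engine is identical in both arguments; your version simply avoids repeating it by packaging it through Theorem \ref{T0}, while the paper's version shows explicitly where the failure of quasilocality enters. Your handling of the ``in particular'' clause --- exhibiting $M_1 \times R_2$ and $R_1 \times M_2$ as distinct maximal ideals, using the paper's standing convention that $1 \neq 0$ in every ring --- is also correct and matches what the paper leaves implicit.
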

\begin{proof}
If $I$ is a primary ideal of $R$, then it is clear that $I$ is a 1-absorbing primary ideal of $R$. Hence assume that $I$ is a 1-absorbing primary ideal of $R$. Let $xy \in I$ for some $x, y \in R$. We may assume that $x, y$ are nonunit elements of $R$. Suppose that $y \not\in \sqrt{I}$. Since $R$ is not a quasilocal ring, there exist a nonunit element $w \in R$ and a unit element $u \in R$ such that $w + u$ is a nonunit element of $R$ by Lemma \ref{L0}. Since $wxy \in I$ and $I$ is a 1-absorbing primary ideal of $R$ and $y \notin \sqrt{I}$, we conclude that $wx \in I$. Also, since $(w + u)xy \in I$ and $I$ is a 1-absorbing primary ideal of $R$ and $y \notin \sqrt{I}$, we conclude that $(w + u)x = wx + ux \in I$. Since $wx \in I$, we conclude that $x\in I$. Thus $I$ is a primary ideal of $R$.
\end{proof}

Let $R = R_1\times R_2$, where $R_1$ and $R_2$ are commutative rings with $1\not = 0$, and let $J$ be a proper ideal of $R$. Then it is well-know that $J$ is a primary ideal of $R$ if and only if $J = I\times R_2$ for some primary ideal $I$ of $R_1$ or $J = R_1\times L$ for some primary ideal $L$ of $R_2$. Hence, in view of Theorem \ref{T1}, we have the following result.

\begin{theorem}\label{T1.5}
Let $R = R_1\times R_2$, where $R_1$ and $R_2$ are commutative rings with $1\not = 0$, and let $J$ be a proper ideal of $R$. The following statements are equivalent.
\begin{enumerate}
\item $J$ is a 1-absorbing primary ideal of $R$.
\item $J$ is a primary ideal of $R$.
\item $J = I\times R_2$ for some primary ideal $I$ of $R_1$ or $J = R_1\times L$ for some primary ideal $L$ of $R_2$.
\end{enumerate}
\end{theorem}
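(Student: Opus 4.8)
The plan is to obtain Theorem \ref{T1.5} as an essentially immediate consequence of Theorem \ref{T1} together with the classical description of the primary ideals of a direct product of two rings that is recalled just before the statement. The equivalence $(2)\Leftrightarrow(3)$ is exactly that classical fact, so the only new content is $(1)\Leftrightarrow(2)$, and for this I would simply invoke Theorem \ref{T1}.

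First I would check that $R = R_1\times R_2$ is never a quasilocal ring: the elements $e = (1,0)$ and $1-e = (0,1)$ are both nonunit elements of $R$ (since $R_1,R_2\neq 0$) whose sum is the unit $1$, so by Lemma \ref{L0}, applied with $w = e$ and $u = 1-e$, $R$ cannot be quasilocal. Consequently the hypothesis of Theorem \ref{T1} is met; in fact the ``in particular'' clause of Theorem \ref{T1} already records precisely the assertion that a proper ideal $J$ of $R$ is a 1-absorbing primary ideal of $R$ if and only if $J$ is a primary ideal of $R$. This gives $(1)\Leftrightarrow(2)$.

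For $(2)\Leftrightarrow(3)$ I would recall the standard argument for completeness. If $J = I\times R_2$ with $I$ a proper primary ideal of $R_1$, then $\sqrt{J} = \sqrt{I}\times R_2$, and a coordinatewise check shows $J$ is primary in $R$; the case $J = R_1\times L$ is symmetric. Conversely, suppose $J$ is a proper primary ideal of $R$. Then $\sqrt{J}$ is a prime ideal of $R_1\times R_2$, hence has the form $P\times R_2$ for a prime $P$ of $R_1$ or $R_1\times Q$ for a prime $Q$ of $R_2$; by symmetry assume $\sqrt{J} = P\times R_2$. Then $(1,0)\notin\sqrt{J}$ while $(0,1)\in\sqrt{J}$, and since $(1,0)(0,1) = (0,0)\in J$ with $J$ primary and $(1,0)\notin\sqrt{J}$ we get $(0,1)\in J$, so $\{0\}\times R_2\subseteq J$. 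Writing $I = \{a\in R_1 : (a,0)\in J\}$ one then checks $J = I\times R_2$, that $I$ is proper, and that the primariness of $J$ transfers to $I$ by testing products of the form $(a,1)(b,1)$. Combining the two equivalences completes the proof.

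There is no genuinely hard step here; the only points deserving attention are verifying that $R_1\times R_2$ falls within the scope of Theorem \ref{T1} (done via the idempotent computation above) and handling the ``without loss of generality'' choice of which factor the prime radical $\sqrt{J}$ comes from, both of which are routine.
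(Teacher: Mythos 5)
Your overall route is the same as the paper's: the paper derives Theorem \ref{T1.5} exactly as you do, by combining the ``in particular'' clause of Theorem \ref{T1} for $(1)\Leftrightarrow(2)$ with the well-known description of primary ideals of $R_1\times R_2$ for $(2)\Leftrightarrow(3)$ (the paper states the latter without proof; your coordinatewise verification of it is correct).

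One local step of yours is misstated, though the conclusion survives. You claim that $R_1\times R_2$ is not quasilocal ``by Lemma \ref{L0}, applied with $w=e=(1,0)$ and $u=1-e=(0,1)$.'' Lemma \ref{L0} requires $u$ to be a \emph{unit}, and $(0,1)$ is not a unit of $R_1\times R_2$; moreover Lemma \ref{L0} is only the implication (hypothesis on sums) $\Rightarrow$ (quasilocal), so exhibiting one bad pair does not contradict it --- you would need its converse. The intended fact is the elementary one that in a quasilocal ring the nonunits form an ideal, so no two nonunits can sum to a unit; alternatively, just observe that $M_1\times R_2$ and $R_1\times M_2$ are two distinct maximal ideals of $R_1\times R_2$. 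In any case this verification is redundant, since, as you note, the ``in particular'' clause of Theorem \ref{T1} already asserts the product case directly.
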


Recall that a nonzero nonunit element $x$ of a ring $R$ is called {\it irreducible} if $x = x_1x_2$ for some $x_1, x_2 \in R$, then $x_1$ is a unit of $R$ or $x_2$ is a unit of $R$. Also, recall that $x$ is called {\it prime} if $x\mid x_1x_2$ for some $x_1, x_2 \in R$, then $x \mid x_1$ or $x\mid x_2$. The following lemma is needed in the proof of our next result.

\begin{lemma}\label{L2}
Let $R$ be a quasilocal ring. If $p$ is a nonzero prime element of $R$, then $p$ is an irreducible element of $R$.
\end{lemma}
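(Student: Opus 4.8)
The plan is to argue directly from the definitions, using the quasilocal hypothesis as a substitute for the cancellation property one would invoke in an integral domain. Let $M$ denote the unique maximal ideal of $R$, so an element of $R$ is a unit precisely when it does not lie in $M$. By definition a prime element is nonzero and nonunit, so the first two requirements for irreducibility are automatic, and the only thing to verify is the factorization condition: whenever $p = ab$ for some $a,b\in R$, one of $a,b$ is a unit.

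So I would start by assuming $p = ab$. Then $p\mid ab$, and since $p$ is prime, $p\mid a$ or $p\mid b$; by the symmetry of the two cases, assume $p\mid a$, say $a = pc$ for some $c\in R$. Substituting back gives $p = ab = pcb$, hence $p(1-cb) = 0$. Now I claim $b$ is a unit: if not, then $b\in M$, so $cb\in M$, whence $1-cb\notin M$, i.e. $1-cb$ is a unit of $R$. Multiplying $p(1-cb)=0$ by $(1-cb)^{-1}$ forces $p=0$, contradicting that $p$ is a (nonzero) prime element. Therefore $b$ is a unit, and in the symmetric case ($p\mid b$, so $b = pc$ and $p(1-ac)=0$) the same reasoning makes $a$ a unit. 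Either way $p$ is irreducible.

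The only real subtlety, and the single point where quasilocalness is used, is passing from $p = pcb$ to the conclusion that $b$ is a unit: in an arbitrary commutative ring one cannot cancel the (possibly zero-divisor) element $p$, but in a quasilocal ring $1-cb$ is automatically invertible as soon as $cb$ is a nonunit, which is exactly what rescues the argument. Everything else is a routine unwinding of the definitions of prime and irreducible elements.
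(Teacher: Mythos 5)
Your proof is correct and follows essentially the same route as the paper's: factor $p=ab$, use primality to write $a=pc$, deduce $p(1-cb)=0$, and invoke quasilocalness to make $1-cb$ a unit when $b$ is a nonunit, forcing the contradiction $p=0$. No gaps; your remark pinpointing where the quasilocal hypothesis replaces cancellation matches the paper's reasoning exactly.
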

\begin{proof}
Assume that $p = p_1p_2$ for some $p_1, p_2 \in R$. Since $p$ is a prime element of $R$, we may assume that $p\mid p_1$. We show that $p_2$ is a unit of $R$. Hence $p_1 = pw$ for some $w\in R$ and thus $p = pwp_2$. Thus $p - pwp_2 = p(1 - wp_2) = 0$. If $p_2$ is a nonunit element of $R$, then $1 - wp_2$ is a unit of $R$ (since $R$ is quasilocal) and thus $p = 0$, a contradiction. Hence $p_2$ is a unit of $R$. Thus $p$ is an irreducible element of $R$.
\end{proof}

The following result provides a method to construct  1-absorbing primary ideals that are not primary ideals.

\begin{theorem}\label{T2}
Let $R$ be a quasilocal ring with maximal ideal $M$. Let $x \in M$ be a nonzero prime element of $R$ such that $M \not = xR$. Then $xM$ is a 1-absorbing primary ideal of $R$ that is not a primary ideal of $R$.
\end{theorem}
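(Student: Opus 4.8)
The plan is to verify four things in order: (i) $xM$ is a proper ideal of $R$; (ii) $\sqrt{xM}=xR$; (iii) $xM$ satisfies the $1$-absorbing primary condition; (iv) $xM$ is not primary. The two structural facts I will lean on are that $xR$ is a prime ideal of $R$ (because $x$ is a prime element, so $R/xR$ has no zero divisors and $xR$ is proper), and that $M$, being the unique maximal ideal, contains every nonunit of $R$.

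Step (i) is immediate, since $xM\subseteq M\subsetneq R$. For step (ii), the inclusion $\sqrt{xM}\subseteq\sqrt{xR}=xR$ holds because $xM\subseteq xR$ and a prime ideal equals its own radical; for the reverse inclusion, given $r\in R$ I would observe that $(xr)^2=x(xr^2)$ and that $x\in M$ forces $xr^2\in M$, so $(xr)^2\in xM$ and hence $xr\in\sqrt{xM}$.

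Step (iii) is the heart of the matter. Let $a,b,c$ be nonunit elements of $R$ with $abc\in xM\subseteq xR$. Since $xR$ is prime, one of $a,b,c$ lies in $xR$. If $c\in xR=\sqrt{xM}$, we are done. If $a\in xR$, write $a=xa_{1}$; since $b$ is a nonunit we have $b\in M$, so $ab=x(a_{1}b)\in xM$; the case $b\in xR$ is symmetric. This establishes the condition. (Note the hypothesis $M\ne xR$ is not used here — it enters only in step (iv).) For step (iv), I would use $M\ne xR$ to choose $m\in M\setminus xR$: then $xm\in xM$, while $m\notin xR=\sqrt{xM}$, and $x\notin xM$, because $x=xm'$ with $m'\in M$ would give $x(1-m')=0$ with $1-m'$ a unit (as $R$ is quasilocal), forcing $x=0$, contrary to hypothesis.

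I do not expect a genuine obstacle, but the one point requiring care is that a prime element of $R$ need not be a non-zero-divisor, so in step (iii) one must resist the temptation to ``cancel'' $x$ from an equation such as $xa_{1}bc=xm$; the argument above avoids this by working with the ideal $xM$ directly and invoking only that a nonunit (namely $b$, or $a$) lies in $M$.
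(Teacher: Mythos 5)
Your proposal is correct and follows essentially the same route as the paper: the 1-absorbing primary verification rests on $xR$ being prime together with the observation that a nonunit factor lies in $M$ (so $x\mid a$ or $x\mid b$ forces $ab\in xM$), and the failure of primaryness uses the witness $xm$ with $m\in M\setminus xR$ and the fact that $x\notin xM$ (which the paper gets from irreducibility via its Lemma on prime elements in quasilocal rings, and which you re-derive by the same unit-times-$x$ computation). Your explicit verification that $\sqrt{xM}=xR$ fills in a step the paper merely asserts, but the argument is otherwise the same.
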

\begin{proof}
Fist, we show that $xM$ is not a primary ideal of $R$. Observe that $\sqrt{xM} = xR$. Since $M \not = xR$, there exists an $m \in M\setminus xR$. Now $xm \in I$. Since $x$ is a nonzero prime element of $R$, we conclude that $x$ is an irreducible element of $R$ by Lemma \ref{L2}, and thus $x\notin xM$. Also, since $m\in M\setminus xR$ and $\sqrt{xM} = xR$, we conclude that $m\notin \sqrt{xM}$. Thus $xM$ is not a primary ideal of $R$. Now we show that $xM$ is a 1-absorbing primary ideal of $R$. Suppose that $abc \in I$ for some nonunit elements $a, b, c \in R$. Suppose that $ab \notin xM$. Then $x\nmid a$ and $x\nmid b$ (note that if $x\mid a$ or $x\mid b$, then $ab \in xM$). Since $x\mid abc$ and $x\nmid ab$, we conclude that $x\mid c$. Thus $c\in \sqrt{xM} = xR$.
\end{proof}

\begin{theorem} \label{T3}
Suppose that $I$ is a 1-absorbing primary ideal of $R$ that is not a primary ideal of $R$. Then there exist an irreducible element $x \in R$ and a nonunit element $y\in R$ such that $xy\in I$, but neither $x\in I$ nor $y\in \sqrt{I}$. Furthermore, if $ab \in I$ for some nonunit elements $a, b \in R$ such that neither $a\in I$ nor $b\in \sqrt{I}$, then $a$ is an irreducible element of $R$.
\end{theorem}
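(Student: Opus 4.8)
The plan is to extract the required elements directly from the hypothesis that $I$ is not primary, and then to obtain irreducibility as an almost immediate consequence of the 1-absorbing primary condition; in fact the second ("Furthermore") assertion does most of the work, and the first assertion falls out of it.

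First I would record the data coming from "not primary". Since $I$ is a proper ideal that is not primary, by the definition of a primary ideal there exist $a, b \in R$ with $ab \in I$ but $a \notin I$ and $b \notin \sqrt{I}$. I claim $a$ and $b$ are necessarily nonunit: if $a$ were a unit then $b = a^{-1}(ab) \in I \subseteq \sqrt{I}$, and if $b$ were a unit then $a = b^{-1}(ab) \in I$, each contradicting the choice of $a,b$. Likewise $a \neq 0$, since $0 \in I$ while $a \notin I$. So $a$ is a nonzero nonunit element, which is precisely the setting in which "irreducible" is defined.

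Next I would prove the "Furthermore" clause. So suppose $ab \in I$ with $a, b$ nonunit, $a \notin I$, and $b \notin \sqrt{I}$; as above $a \neq 0$. To show $a$ is irreducible, write $a = a_1 a_2$ for some $a_1, a_2 \in R$ and assume, toward a contradiction, that neither $a_1$ nor $a_2$ is a unit. Then $a_1, a_2, b$ are all nonunit and $a_1 a_2 b = ab \in I$; since $I$ is a 1-absorbing primary ideal and $b \notin \sqrt{I}$, we are forced to conclude $a_1 a_2 = a \in I$, contradicting $a \notin I$. Hence at least one of $a_1, a_2$ is a unit, so $a$ is irreducible.

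Finally, for the first assertion, take the witnesses $a, b$ produced in the first step: they satisfy $ab \in I$, $a \notin I$, $b \notin \sqrt{I}$, and are nonunit, so the clause just proved shows $a$ is irreducible; thus $x = a$ and $y = b$ do the job. I do not anticipate a genuine obstacle here. The only points that need a moment's care are verifying that the witness elements are nonunit and that $a$ is nonzero (so that "irreducible" is meaningful at all), and applying the 1-absorbing primary hypothesis to the correct triple $a_1, a_2, b$. Note that Theorem \ref{T0} guarantees $R$ is quasilocal in this situation, but the argument above does not need this fact.
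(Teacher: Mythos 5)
Your proof is correct and follows essentially the same route as the paper: decompose $a = a_1a_2$ with both factors nonunit, apply the 1-absorbing primary condition to $a_1a_2b \in I$ with $b \notin \sqrt{I}$, and derive the contradiction $a \in I$. You are in fact slightly more careful than the paper, since you explicitly verify that the witnesses from "not primary" are nonzero nonunits (needed for "irreducible" to make sense), a point the paper asserts without checking.
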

\begin{proof}
Since $I$ is not a primary ideal of $R$, there exist nonunit elements $x, y \in R$ such that neither $x \in R$ nor $y \in \sqrt{I}$.  Suppose that $x$ is not an irreducible element of $R$. Then $x = cd$ for some nonunit elements $c, d \in R$. Since $xy = cdy \in I$ and $I$ is a 1-absorbing primary ideal of $R$ and $y \notin \sqrt{I}$, we conclude that $cd = x \in I$, a contradiction. Hence $x$ is an irreducible element of $R$.
\end{proof}

\begin{theorem}\label{T4}
Let $R$ be a quasilocal ring with maximal ideal $M$ and $P$ be a prime ideal of $R$ such that $P\subseteq M$. Then $PM$ is a 1-absorbing primary ideal of $R$.
\end{theorem}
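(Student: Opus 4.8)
The plan is to first pin down $\sqrt{PM}$ and then verify the defining condition directly, invoking primality of $P$ twice together with quasilocality of $R$.

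First I would compute the radical. Since $PM \subseteq P$ and a prime ideal equals its own radical, $\sqrt{PM} \subseteq \sqrt{P} = P$. Conversely, for any $p \in P$ we have $p^2 \in P^2 \subseteq PM$, the inclusion $P^2 \subseteq PM$ holding precisely because $P \subseteq M$; hence $p \in \sqrt{PM}$, so $P \subseteq \sqrt{PM}$ and therefore $\sqrt{PM} = P$. I would also note in passing that $PM \subseteq M \subsetneq R$, so $PM$ is indeed a proper ideal and the definition of a 1-absorbing primary ideal applies to it.

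Next I would run the defining condition. Let $a, b, c$ be nonunit elements of $R$ with $abc \in PM$, and assume $c \notin \sqrt{PM} = P$; the goal is to show $ab \in PM$. Because $R$ is quasilocal with maximal ideal $M$, every nonunit of $R$ lies in $M$, so in particular $a, b \in M$. From $abc \in PM \subseteq P$, $c \notin P$, and $P$ prime, we get $ab \in P$. Applying primality of $P$ once more gives $a \in P$ or $b \in P$: if $a \in P$, then $ab \in PM$ since $a \in P$ and $b \in M$; if $b \in P$, then $ab \in PM$ since $b \in P$ and $a \in M$. In either case $ab \in PM$, so $PM$ is a 1-absorbing primary ideal of $R$.

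I do not expect a genuine obstacle here. The one subtlety worth flagging is that knowing only $ab \in P$ is not enough to conclude $ab \in PM$; one must use the primality of $P$ a second time to force one of the two factors into $P$, and then rely on quasilocality of $R$ to guarantee the other (nonunit) factor lies in $M$, so that their product lands in the product ideal $PM$.
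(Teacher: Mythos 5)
Your proof is correct and is essentially the paper's argument run in the contrapositive direction: the paper assumes $a,b\notin P$ and concludes $c\in P$, while you assume $c\notin P$ and conclude $a\in P$ or $b\in P$, but both rest on the same three facts ($\sqrt{PM}=P$, primality of $P$, and that nonunits of a quasilocal ring lie in $M$). No gap.
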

\begin{proof}.
First observe that $\sqrt{PM} = P$. Suppose that $abc \in PM$ for some nonunit elements $a, b, c \in M$. If $a \in P$ or $b \in P$, then it is clear that $ab \in PM$. Hence assume that neither $a\in P$ nor $b \in P$. Thus $ab \not\in P$. Since  $abc \in PM \subseteq P$ and $ab \notin P$, we conclude that $c \in P = \sqrt{PM}$. Thus $PM$ is a 1-absorbing primary ideal of $R$.
\end{proof}

\begin{remark}
Observe that $PM$ in Theorem \ref{T4} needs not be a primary ideal of $R$ by Theorem \ref{T2}.
\end{remark}

\begin{theorem}\label{T5} Let $I$ be a 1-absorbing primary ideal of a ring $R$. Suppose that $c\in R \setminus I$ is a nonunit element of $R$. Then $(I : c) = \{x \in R \mid cx \in I\}$ is a primary ideal of $R$.
\end{theorem}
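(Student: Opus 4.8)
The plan is to verify directly that $(I:c)$ satisfies the defining condition of a primary ideal, using the 1-absorbing primary hypothesis on $I$ together with the fact that $c$ is a fixed nonunit outside $I$. First I would check that $(I:c)$ is a proper ideal: if $1 \in (I:c)$ then $c \in I$, contradicting the hypothesis $c \in R\setminus I$. Next, take $xy \in (I:c)$ for some $x,y \in R$, so that $cxy \in I$; the goal is to show $x \in (I:c)$ or $y \in \sqrt{(I:c)}$. As is standard in this circle of ideas, one reduces to the case where $x$ and $y$ are nonunits: if $x$ is a unit then $y \in (I:c) \subseteq \sqrt{(I:c)}$ and we are done, and symmetrically if $y$ is a unit then $x \in (I:c)$.

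So assume $x, y$ are nonunit elements with $cxy \in I$. Since $c$ is also a nonunit and $I$ is 1-absorbing primary, applying the definition to the product $c \cdot x \cdot y \in I$ (in whichever grouping is convenient — say viewing it as $(cx)\cdot ?$, but more carefully as the product of the three nonunits $c,x,y$) yields $cx \in I$ or $y \in \sqrt{I}$. In the first case $cx \in I$ means $x \in (I:c)$, as desired. In the second case, $y \in \sqrt{I}$, so $y^n \in I$ for some $n \geq 1$; then $cy^n \in I$, hence $y^n \in (I:c)$, so $y \in \sqrt{(I:c)}$. Either way the primary condition is confirmed, so $(I:c)$ is a primary ideal of $R$.

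The only real subtlety — and the step I would write out most carefully — is the application of the 1-absorbing primary definition, since that definition requires \emph{all three} factors $a,b,c$ to be nonunits. Here the three factors are $c$, $x$, and $y$: we have arranged $x,y$ to be nonunits, and $c$ is a nonunit by hypothesis, so the definition applies cleanly and produces $cx \in I$ or $y \in \sqrt I$. One should note that we cannot instead group the product as $c\cdot(xy)$ and hope to invoke the definition, because $xy$ need not be a nonunit in a way that helps, and more importantly the 1-absorbing primary condition is genuinely about three separate nonunit factors; it is precisely the availability of $c$ as a third nonunit factor that makes the colon ideal $(I:c)$ primary rather than merely 1-absorbing primary. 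No quasilocality or other global hypothesis on $R$ is needed.
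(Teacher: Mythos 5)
Your proposal is correct and follows essentially the same route as the paper's proof: apply the 1-absorbing primary condition to the triple product $c\cdot x\cdot y$ of nonunit factors to conclude $cx\in I$ or $y\in\sqrt{I}\subseteq\sqrt{(I:c)}$. The only differences are cosmetic — you spell out the properness of $(I:c)$, the reduction to nonunit $x,y$, and the inclusion $\sqrt{I}\subseteq\sqrt{(I:c)}$, all of which the paper leaves implicit.
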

\begin{proof}
Suppose that $I$ is a 1-absorbing primary ideal of $R$ and $c$ is a nonunit element of $R$ such that $c \in R\setminus I$. Let $ab\in (I : c)$
for some elements $a, b\in R$. We may assume that $a, b$ are nonunit elements of $R$.  Suppose that $a\notin (I : c)$. Hence $ca \notin I$.  Since $cab \in I$ and $I$ is a 1-absorbing primary ideal of $R$ and $ca \notin I$, we conclude that $b\in \sqrt{I} \subseteq \sqrt{(I : c)}$. Hence $(I : c)$ is a primary ideal of $R$.
\end{proof}

\begin{remark}
Let $I$ be a 1-absorbing primary ideal of a ring $R$ and $c$ be a nonunit element of $R$ such that $c \in R\setminus I$. Suppose that $c \notin \sqrt{I}$. Since $\sqrt{I}$ is a a prime ideal of $R$, we conclude that $I \subseteq (I : c) \subseteq \sqrt{I}$, and thus $\sqrt{(I : c)} = \sqrt{I}$. Suppose that $c\in \sqrt{I}\setminus I$. Let $n$ be the last positive integer $n\geq 2$ such that $c^n \in I$. Then $c^{n-1} \in (I : c) \setminus I$. Thus $I \subsetneq (I : c)$. Also, $(I : c)$ needs not be a subset of $\sqrt{I}$; for let $R$, $I$, $M_M$ be as in Example \ref{e1}, and $c = x$. Then $(I : c) = M_M \nsubseteq \sqrt{I}$.
\end{remark}

Recall that a ring $R$ is called {\it divided} if for every prime ideal $P$ of $R$ and for every $x\in R\setminus P$, we have $x\mid p$ for every $p\in P$. Recall that a ring $R$ is called a {\it chained} ring if for every $x, y \in R$, we have $x\mid y$ or $y\mid x$. Thus every chained ring is divided. Hence if $R$ is a divided ring, then $R$ is a quasilocal ring. We have the following result.

\begin{theorem}\label{T6}
Let $R$ be a divided ring. Then a proper ideal $I$ of $R$ is a 1-absorbing primary ideal of $R$ if and only if $I$ is a primary ideal of $R$. In particular, if $R$ is a chained ring, then a proper ideal $I$ of $R$ is a 1-absorbing primary ideal of $R$ if and only if $I$ is a primary ideal of $R$.
\end{theorem}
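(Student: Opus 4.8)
The plan is to establish the nontrivial direction: if $R$ is a divided ring and $I$ is a 1-absorbing primary ideal of $R$, then $I$ is a primary ideal. The converse is immediate, since every primary ideal is 1-absorbing primary, and the ``in particular'' clause follows at once because every chained ring is divided (as noted just before the statement). So the whole theorem reduces to one implication, which I would prove by contradiction.

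Suppose then that $I$ is a 1-absorbing primary ideal of $R$ that is not primary. By Theorem \ref{T3} there exist an irreducible element $x \in R$ and a nonunit element $y \in R$ with $xy \in I$, yet $x \notin I$ and $y \notin \sqrt{I}$. Since $xy \in I \subseteq \sqrt{I}$ and $\sqrt{I}$ is a prime ideal of $R$ by Theorem \ref{T-1}, the fact that $y \notin \sqrt{I}$ forces $x \in \sqrt{I}$. Now I would invoke the divided hypothesis with the prime ideal $\sqrt{I}$ and the element $y \in R \setminus \sqrt{I}$: it gives $y \mid p$ for every $p \in \sqrt{I}$, and in particular $y \mid x$, say $x = yt$ for some $t \in R$. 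Because $x$ is irreducible and $y$ is a nonunit, $t$ must be a unit of $R$, so $x$ and $y$ are associates; hence $y = t^{-1}x \in xR \subseteq \sqrt{I}$, contradicting $y \notin \sqrt{I}$. This contradiction shows that every 1-absorbing primary ideal of a divided ring is primary.

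The argument is short, and there is no serious obstacle: Theorem \ref{T3} supplies the crucial irreducible witness $x$, Theorem \ref{T-1} gives primeness of $\sqrt{I}$, and the divided condition is exactly what is needed to force $y \mid x$. The one point to get right is the interaction of ``$x$ irreducible'' with ``$y \mid x$ and $y$ a nonunit'': together these collapse $x$ and $y$ into associates, which is precisely what contradicts $y \notin \sqrt{I}$ once we know $x \in \sqrt{I}$.
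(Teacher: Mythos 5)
Your proof is correct, but it takes a genuinely different route from the paper's. Both arguments share the same opening move: from $xy\in I$ with $y\notin\sqrt{I}$, primeness of $\sqrt{I}$ (Theorem \ref{T-1}) gives $x\in\sqrt{I}$, and the divided hypothesis applied to the prime ideal $\sqrt{I}$ yields $y\mid x$, say $x=yw$. After that you diverge. The paper argues directly: it notes that $w$ must be a nonunit (if $w$ were a unit, then $y=xw^{-1}\in\sqrt{I}$, contradicting $y\notin\sqrt{I}$), and then applies the 1-absorbing primary property a second time, to the product $y\cdot w\cdot y=xy\in I$ with $y\notin\sqrt{I}$, to conclude $x=yw\in I$; hence $I$ is primary. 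You instead argue by contradiction and import from Theorem \ref{T3} that the witness $x$ to non-primariness may be taken irreducible; irreducibility together with $y$ being a nonunit forces $w$ to be a unit, so $y$ is an associate of $x\in\sqrt{I}$, contradicting $y\notin\sqrt{I}$. In effect, the paper re-applies the defining property of 1-absorbing primary ideals inside the proof, whereas you outsource that application to Theorem \ref{T3} (whose own proof contains exactly such an application) and then exploit the resulting irreducibility. Your version cleanly reuses an earlier structural result and is arguably more conceptual; the paper's is direct and self-contained modulo Theorem \ref{T-1}. Both treat the converse direction and the chained-ring clause in the same trivial way, and every step of your argument checks out.
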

\begin{proof}
It is clear that every primary ideal of $R$ is a 1-absorbing primary ideal of $R$. Hence assume that $I$ is a 1-absorbing primary ideal of $R$. Suppose that $xy\in I$ for some $x, y \in R$ and $y\notin \sqrt{I}$. We may assume that $x, y$ are nonunit elements of $R$. Since $\sqrt{I}$ is a prime ideal of $R$ by Theorem \ref{T-1} and $y\notin \sqrt{I}$, we conclude that $x\in \sqrt{I}$. Since $R$ is divided, we conclude that $y \mid x$. Thus $x = yw$ for some $w\in R$. Since $y\notin \sqrt{I}$ and $x \in \sqrt{I}$, we conclude that $w$ is a nonunit element of $R$. Since $xy = ywy \in I$ and $I$ is a 1-absorbing primary ideal of $R$ and $y\notin \sqrt{I}$, we conclude that $yw = x \in I$. Thus $I$ is a primary ideal of $R$.
\end{proof}

Recall that a proper ideal $I$ of $R$ is called {\it principal} if $I = xR$ for some $x\in R$.

\begin{theorem}\label{T7}
Let $R$ be a divided ring with maximal ideal $M$. If $M$ is not a principal prime ideal of $R$, then every nonzero prime ideal of $R$ is not principal.
\end{theorem}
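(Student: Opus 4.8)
The plan is to argue by contradiction. Suppose, for the sake of contradiction, that some nonzero prime ideal $P$ of $R$ is principal, say $P = pR$ with $p \neq 0$. Since $R$ is a divided ring it is quasilocal with unique maximal ideal $M$, and every prime ideal of $R$ is contained in $M$; in particular $P \subseteq M$. Because $M$ is \emph{not} a principal prime ideal while $P$ is a principal prime ideal, we must have $P \neq M$, hence $P \subsetneq M$, and so we may choose an element $x \in M \setminus P$.

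Next I would exploit the divided hypothesis applied to the prime ideal $P$ and the element $x \in R \setminus P$: this gives $x \mid p$, say $p = xy$ for some $y \in R$. Now $p = xy \in P$, $x \notin P$, and $P$ is prime, so $y \in P = pR$; write $y = pz$ with $z \in R$. Substituting yields $p = x(pz) = xpz$, that is, $p(1 - xz) = 0$.

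To finish, observe that $x \in M$ forces $xz \in M$, hence $1 - xz \notin M$; since $R$ is quasilocal, an element outside $M$ is a unit, so $1 - xz$ is a unit of $R$. Multiplying $p(1-xz)=0$ by its inverse gives $p = 0$, contradicting $p \neq 0$. Therefore no nonzero prime ideal of $R$ can be principal, which is the assertion. I do not expect any genuine obstacle here: the only points requiring care are invoking the divided property to factor $p$ through $x$, and invoking the quasilocal property to recognize $1 - xz$ as a unit; once $x \in M \setminus P$ is fixed the remainder is a short ideal-theoretic chase.
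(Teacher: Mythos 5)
Your proof is correct, but it takes a genuinely different route from the paper. The paper's proof is a two-line application of its own machinery: if $P=xR$ were a nonzero principal prime, then $x$ is a nonzero prime element with $M\neq xR$ (since $M$ is not a principal prime), so Theorem \ref{T2} makes $xM$ a 1-absorbing primary ideal that is not primary, contradicting Theorem \ref{T6}, which says that in a divided ring these two notions coincide. You instead bypass the 1-absorbing primary concept entirely and run a direct element chase: pick $x\in M\setminus P$ (possible since $P\subsetneq M$), use the divided hypothesis to write $p=xy$, use primeness of $P$ to get $y=pz$, and conclude $p(1-xz)=0$ with $1-xz$ a unit because $xz\in M$ and $R$ is quasilocal, forcing $p=0$. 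All steps check out; note that your computation is essentially the same trick the paper uses inside its proof of Lemma \ref{L2} (the equation $p(1-wp_2)=0$ with $1-wp_2$ a unit), just deployed directly on the theorem. What your approach buys is self-containedness and a slightly sharper picture --- it shows outright that a nonzero principal prime in a divided ring cannot be properly contained in $M$ --- whereas the paper's proof buys economy within its own framework and illustrates how the new class of ideals can be used to derive classical-looking consequences.
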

\begin{proof}
 Suppose that a nonzero prime ideal $P = xR$ for some $x\in R$. Then $x$ is a nonzero prime element of $R$ and $M \not = xR$. Thus $xM$ is a 1-absorbing primary ideal of $R$ that is not a primary ideal of $R$ by Theorem \ref{T2}, which is a contradiction by Theorem \ref{T6}.
 \end{proof}

\begin{theorem}\label{T8}
Let $R$ be a divided integral domain and $P$ be a prime ideal of $R$. Then $P^n$ is a primary ideal of $R$ for every positive integer $n\geq 1$, and hence $P^n$ is a 1-absorbing primary ideal of $R$ for every positive integer $n\geq 1$.
\end{theorem}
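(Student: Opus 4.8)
The plan is to prove the stronger assertion that $P^{n}$ is a primary ideal of $R$; the $1$-absorbing primary conclusion then follows immediately, since every primary ideal is a $1$-absorbing primary ideal. Because $P$ is prime we have $\sqrt{P^{n}} = P$, so the whole task reduces to showing: if $ab \in P^{n}$ for some $a,b \in R$ with $b \notin P$, then $a \in P^{n}$.

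The heart of the argument is the identity $bP^{n} = P^{n}$ for every $b \in R\setminus P$. The inclusion $bP^{n}\subseteq P^{n}$ is trivial. For the reverse, I would take a typical generator $p_{1}p_{2}\cdots p_{n}$ of $P^{n}$ with $p_{1},\dots,p_{n}\in P$; since $R$ is divided and $b\notin P$, $b$ divides $p_{1}$, say $p_{1}=bq_{1}$, and then $q_{1}\in P$ because $bq_{1}=p_{1}\in P$ while $b\notin P$ and $P$ is prime. Hence $p_{1}p_{2}\cdots p_{n}=b(q_{1}p_{2}\cdots p_{n})\in bP^{n}$, and since $P^{n}$ is generated (as an ideal) by such products, $P^{n}\subseteq bP^{n}$, so $bP^{n}=P^{n}$.

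Granting this, suppose $ab\in P^{n}$ with $b\notin P$; note $b\neq 0$ since $0\in P$. Then $ab\in P^{n}=bP^{n}$, so $ab=bc$ for some $c\in P^{n}$, and cancelling $b$ (legitimate since $R$ is a domain and $b\neq 0$) gives $a=c\in P^{n}$. Thus $P^{n}$ is primary, and hence a $1$-absorbing primary ideal, for every $n\geq 1$. I do not anticipate a serious obstacle: the divided hypothesis is used exactly to obtain $b\mid p_{1}$, and the integral-domain hypothesis is used exactly for the final cancellation — together these are precisely what force $P^{n}$ to be primary here, in contrast with the general situation, where powers of prime ideals need not be primary.
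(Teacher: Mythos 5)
Your proof is correct and uses essentially the same argument as the paper: both exploit the divided hypothesis to factor $b$ (resp.\ $y$) out of the elements of $P$ appearing in a representation of an element of $P^{n}$, observe that the quotients remain in $P$ by primeness, and then cancel $b$ using the integral domain hypothesis. Your packaging of this as the identity $bP^{n}=P^{n}$ for $b\notin P$ is a slightly tidier formulation, but the underlying mechanism is identical to the paper's.
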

\begin{proof}
If $n = 1$, then there is nothing to prove. Thus let $n \geq 2$ and Suppose that $xy \in P^n$ for some $x, y \in R$. Then $xy = p_1c_1 + \cdots + p_kc_k \in P^n$ for some $p_1, ..., p_k \in P$ and $c_1, ..., c_k \in P^{n-1}$ for some positive integer $k\geq 1$. Suppose that $y\not\in P$. Since $R$ is divided, we have Then $y\mid p_i$ for every $i$, $1 \leq i \leq k$. Hence for every $i$, $1 \leq i \leq k$, we have $p_i = yd_i$ for some $d_i \in P$. Thus $xy = yd_1c_1 + \cdots + yd_kc_k$. Hence $y(x - (d_1c_1 + \cdots + d_kc_k)) = 0$. Since $R$ is an integral domain, we conclude that $x = d_1c_1 + \cdots + d_kc_k \in P^n$. Thus $P^n$ is a primary ideal of $R$. Since every primary ideal of $R$ is a 1-absorbing primary ideal, we conclude that $P^n$ is a 1-absorbing primary ideal of $R$ for every positive integer $n\geq 2$.
\end{proof}

Recall that an integral domain $R$ is called a {\it valuation} domain if $R$ is a chained ring.

\begin{theorem}\label{T9}
Let $R$ be a valuation domain and $I$ be a proper ideal of $R$ with $\sqrt{I} = P$ (note that $P$ is a prime ideal of $R$). The following statements are equivalents.
\begin{enumerate}
\item $I$ is a 1-absorbing primary ideal of $R$.
\item $I$ is a primary ideal of $R$.
\item If $P \not = P^2$, then $I = P^n$ for some positive integer $n\geq 1$.
\end{enumerate}
\end{theorem}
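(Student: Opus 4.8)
Since $R$ is a valuation domain it is a chained ring, hence a divided ring (and in particular quasilocal), so the equivalence (1)$\Leftrightarrow$(2) is immediate from Theorem~\ref{T6}. For (3)$\Rightarrow$(2): if $P=P^2$ the condition in (3) is vacuous; if $P\neq P^2$, then (3) gives $I=P^n$ for some $n\geq 1$, and since $R$ is a divided integral domain, $P^n$ is a primary ideal of $R$ by Theorem~\ref{T8}. Either way $I$ is primary.

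The substance is the implication (2)$\Rightarrow$(3), and here is how I would carry it out. Assume $I$ is primary with $\sqrt I=P$ and $P\neq P^2$; pick $p\in P\setminus P^2$. First I would show $PR_P=pR_P$: for $q\in P$, since $R$ is chained, either $p\mid q$, so $q\in pR_P$, or $q\mid p$, say $p=qw$, in which case $w\notin P$ (else $p=qw\in P^2$), so $w$ is a unit of $R_P$ and $q=pw^{-1}\in pR_P$. Thus $R_P$ is a valuation ring with principal maximal ideal $pR_P$. Since $p\in P=\sqrt I$, let $n\geq 1$ be least with $p^n\in I$. Then $p^n\in IR_P$, while $p^{n-1}\notin IR_P$: if $p^{n-1}s\in I$ for some $s\notin P$, then, as $s\notin\sqrt I$, primariness of $I$ would force $p^{n-1}\in I$, contradicting the choice of $n$. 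By the total ordering of the ideals of $R_P$ this yields $p^nR_P\subseteq IR_P\subsetneq p^{n-1}R_P$; and since $R_P$ is local, every element of $p^{n-1}R_P$ is $p^{n-1}$ times a unit of $R_P$ or $p^{n-1}$ times an element of $pR_P$, so the only ideal lying strictly between $p^nR_P$ and $p^{n-1}R_P$ is $p^nR_P$ itself. Hence $IR_P=p^nR_P=(PR_P)^n=P^nR_P$. Now both $I$ and $P^n$ are $P$-primary ideals of $R$ (for $P^n$ use Theorem~\ref{T8} together with $\sqrt{P^n}=P$), hence each is the contraction of its extension to $R_P$; therefore $I=IR_P\cap R=P^nR_P\cap R=P^n$, which is (3).

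I expect the main obstacle to be making visible that the hypothesis $P\neq P^2$ is genuinely doing the work: it is precisely what makes $PR_P$ principal, and principality is what lets me pin $IR_P$ down as a single power $p^nR_P$. Without it, $R_P$ can have a non-principal maximal ideal, an ideal $I$ with $\sqrt I=P$ need not be a power of $P$, and one does not expect (2)$\Leftrightarrow$(3) to survive. The one mildly technical step is the descent from $IR_P=P^nR_P$ back to $I=P^n$; the clean route is the contraction fact used above, but if one prefers to stay inside $R$ one can prove $P^n\subseteq I$ by writing an arbitrary product of $n$ elements of $P$ as $p^n$ times an element of $R_P$ and invoking $P$-primariness of $I$, and prove $I\subseteq P^n$ by analysing, for a given $x\in I$, the exact power of $p$ dividing $x$ in $R$ and checking that the complementary factor falls into the appropriate power of $P$.
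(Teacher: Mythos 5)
Your proposal is correct in substance and follows the paper's skeleton for two of the three arrows: $(1)\Leftrightarrow(2)$ via Theorem \ref{T6} (valuation $\Rightarrow$ chained $\Rightarrow$ divided) and $(3)\Rightarrow(2)$ via Theorem \ref{T8}, exactly as the paper does. The genuine difference is in $(2)\Rightarrow(3)$: the paper disposes of it with a citation to \cite[Theorem 5.11]{LM}, whereas you prove it from scratch. Your argument --- pick $p\in P\setminus P^2$, show $PR_P=pR_P$ using the chained condition, bracket $IR_P$ between $p^nR_P$ and $p^{n-1}R_P$ via the minimal $n$ with $p^n\in I$ and the total ordering of ideals of $R_P$, conclude $IR_P=p^nR_P$ since no ideal sits strictly between consecutive powers of a principal maximal ideal, and then descend by contracting $P$-primary ideals --- is sound at every step (the contraction step is legitimate because both $I$ and $P^n$ are $P$-primary, the latter by Theorem \ref{T8}). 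What your route buys is self-containedness and a clear display of where $P\neq P^2$ enters (it is exactly what makes $PR_P$ principal); what the paper's route buys is brevity. One caution, which applies equally to the paper's proof: in the direction $(3)\Rightarrow(2)$ you write that when $P=P^2$ the hypothesis in (3) is vacuous and then assert ``either way $I$ is primary,'' but a vacuously true (3) gives you no handle on $I$ at all, and there do exist valuation domains with a non-maximal prime $P=P^2$ and a non-primary ideal $I$ with $\sqrt I=P$. The paper's appeal to Theorem \ref{T8} silently assumes $I=P^n$ and so has the same lacuna; the statement is really only an equivalence under the standing assumption $P\neq P^2$, and it would be worth saying so explicitly rather than claiming the case is handled.
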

\begin{proof}
{\bf $(1) \Rightarrow (2)$}. Since $R$ is divided, the claim is clear by Theorem \ref{T6}.

{\bf $(2) \Rightarrow (3)$}. The claim is clear by \cite[Theorem 5.11]{LM}.

{\bf $ (3) \Rightarrow (1)$}. Since $R$ is divided, the claim is clear by Theorem \ref{T8}.
\end{proof}

Let $R$ be an integral domain with quotient field $K$. Recall that a proper ideal $I$ of $R$ is called {\it invertible} if $II^{-1} = R$, where $I^{-1} = \{r \in K \mid rI \subseteq R\}$. An integral domain $R$ is called a {\it Prufer} domain if every nonzero finitely generated ideal of $R$ is invertible.

\begin{theorem}\label{T10}
Let $R$ be a Prufer domain and $I$ be a proper ideal of $R$ with $\sqrt{I} = P$ for some prime ideal $P$ of $R$. Then the following statements are equivalents.
\begin{enumerate}
\item $I$ is a 1-absorbing primary ideal of $R$.
\item $I$ is a primary ideal of $R$.
\item If $P$ is a finitely generated ideal of $R$, then $I = P^n$ for some positive integer $n\geq 1$.
\end{enumerate}
\end{theorem}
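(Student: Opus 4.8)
The plan is to reduce to the valuation-domain case treated in Theorem~\ref{T9}, exploiting that $R$ is Prüfer precisely when $R_M$ is a valuation domain for every maximal ideal $M$, together with the identity $I=\bigcap_M IR_M$ (over all maximal ideals $M$), valid for any ideal $I$. The implication $(2)\Rightarrow(1)$ is immediate, since every primary ideal is $1$-absorbing primary.

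For $(1)\Rightarrow(2)$ — which is unconditional — I would argue by a dichotomy on $R$: a quasilocal Prüfer domain coincides with its own localization at its maximal ideal, hence is a valuation domain, and then Theorem~\ref{T6} gives $(1)\Leftrightarrow(2)$; and if $R$ is not quasilocal, Theorem~\ref{T1} gives it directly. It then remains to show that \emph{when $P$ is finitely generated}, $(2)$ is moreover equivalent to ``$I=P^{n}$ for some $n\ge1$.''

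For ``$(2)\Rightarrow I=P^{n}$'': let $I$ be primary with $P=\sqrt I$ finitely generated. If $I=(0)$ then $P=(0)=P^{1}$, so assume $P\neq(0)$; being finitely generated it is then invertible. In the valuation domain $R_P$ the ideal $PR_P$ is finitely generated, hence principal, so $PR_P\neq(PR_P)^{2}$ by Nakayama, and $IR_P$ is a $PR_P$-primary ideal (a localization of the primary $I$) with $\sqrt{IR_P}=PR_P$; Theorem~\ref{T9} therefore yields $IR_P=(PR_P)^{n}=P^{n}R_P$ for some $n$. To push this down to $R$ I would check that $P^{n}$ is itself $P$-primary: clearly $\sqrt{P^{n}}=P$, and if $ab\in P^{n}$ with $b\notin P$, then for every maximal $M\supseteq P$ the ideal $P^{n}R_M=(PR_M)^{n}$ is primary in the divided (valuation) domain $R_M$ by Theorem~\ref{T8}, with $b\notin PR_M$ because $PR_M\cap R=P$; hence $ab\in P^{n}R_M$ forces $a\in P^{n}R_M$, and since $P^{n}R_M=R_M$ for $M\not\supseteq P$, we get $a\in\bigcap_M P^{n}R_M=P^{n}$. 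As a primary ideal equals its contraction from the localization at its radical (if $sx\in I$ with $s\notin\sqrt I$ then $x\in I$), it follows that $I=IR_P\cap R=P^{n}R_P\cap R=P^{n}$. The same computation shows $P^{n}$ is primary, which is ``$I=P^{n}\Rightarrow(2)$'', closing the equivalence.

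The main obstacle, I expect, is the descent in ``$(2)\Rightarrow I=P^{n}$'' — namely, verifying that $(PR_P)^{n}$ contracts back to exactly $P^{n}$ rather than to a strictly larger $P$-primary ideal, equivalently that prime powers remain primary in the Prüfer domain $R$. This is where Theorem~\ref{T8} and the identity $I=\bigcap_M IR_M$ must be combined; the other ingredients (localization preserves primary and $1$-absorbing primary ideals, a finitely generated ideal of a valuation domain is principal, and $PR_M\cap R=P$ for $P\subseteq M$) are routine local algebra.
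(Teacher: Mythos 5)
Your proposal is correct, and for the core equivalence $(1)\Leftrightarrow(2)$ it follows exactly the paper's route: split on whether $R$ is quasilocal, use that a quasilocal Pr\"ufer domain is a valuation (hence divided) domain to invoke Theorem \ref{T6}/\ref{T9}, and use Theorem \ref{T1} otherwise. (Incidentally, the paper's printed proof cites ``Theorem \ref{T10}'' here, a circular self-reference that is plainly a typo for Theorem \ref{T9}; your version states the intended reduction correctly.) Where you genuinely diverge is the equivalence with $I=P^{n}$: the paper disposes of both directions by citing \cite[Exercise 2, p.\ 144]{LM}, whereas you give a self-contained argument --- localize at $P$, use that the finitely generated ideal $PR_P$ is principal and hence non-idempotent to apply Theorem \ref{T9} in the valuation domain $R_P$, verify via $P^{n}=\bigcap_M P^{n}R_M$ and Theorem \ref{T8} that $P^{n}$ is $P$-primary, and then contract. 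This costs more space but buys two things: the argument stays inside the paper's own toolkit rather than outsourcing to a textbook exercise, and it forces a careful reading of statement $(3)$, which as literally written is a conditional (vacuously true when $P$ is not finitely generated, so the paper's literal ``$(3)\Rightarrow(1)$'' is not a sound implication in that case). Your reformulation --- $(1)\Leftrightarrow(2)$ unconditionally, with the further equivalence to $I=P^{n}$ holding when $P$ is finitely generated --- is the correct reading and quietly repairs that defect. All the local-algebra steps you flag as routine (primary ideals contract from the localization at their radical, $PR_M\cap R=P$ for $P\subseteq M$, extension commutes with powers) are indeed standard and used correctly.
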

\begin{proof}
 {\bf $(1) \Rightarrow (2)$}. Suppose that $R$ is quasilocal with maximal ideal $M$.  Since $R$ is a Prufer domain, it is known that $R = R_M$ is a valuation domain and hence the claim follows from Theorem \ref{T10}. Suppose that $R$ is not quasilocal. Then the  claim follows by  Theorem \ref{T1}.

{\bf $(2) \Rightarrow (3)$}. The claim is clear by \cite[Exercise 2, p. 144]{LM}.

{\bf $ (3) \Rightarrow (1)$}. Since $I = P^n$ for some positive integer $n\geq 1$ and $P$ is a finitely generated ideal of $R$, we conclude that $I$ is a primary ideal of $R$ by \cite[Exercise 2, p. 144]{LM}.  Thus $I$ is a 1-absorbing primary ideal of $R$.
\end{proof}

Recall that an integral domain is called  a {\it Dedekind} domain if every nonzero proper ideal of $R$ is invertible.

\begin{theorem}\label{T11}
\label{D}Let $R$ be a Dedekind domain and $I$ be a nonzero proper ideal of $R$. Then
$I$ is a 1-absorbing primary ideal of $R$ if and only if $\sqrt{I}$ is a
prime ideal of $R$.
\end{theorem}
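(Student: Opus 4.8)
The plan is to prove the two implications separately, with essentially all the work in one of them. For the forward direction, if $I$ is a $1$-absorbing primary ideal of $R$ then $\sqrt{I}$ is a prime ideal of $R$ by Theorem \ref{T-1}; no hypothesis on $R$ is needed here, so this direction is immediate.

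For the converse, assume $\sqrt{I} = P$ is a prime ideal of $R$. The first step is to pin $I$ down as a prime power. Since $R$ is a Dedekind domain and $I$ is a nonzero proper ideal, $I$ factors as $I = P_{1}^{e_{1}}\cdots P_{k}^{e_{k}}$ for distinct nonzero (hence maximal) prime ideals $P_{1},\dots,P_{k}$ and integers $e_{i}\geq 1$. As distinct nonzero primes of a Dedekind domain are pairwise comaximal, $\sqrt{I} = P_{1}\cap\cdots\cap P_{k}$. From $P_{1}\cdots P_{k}\subseteq P_{1}\cap\cdots\cap P_{k} = P$ and the primeness of $P$, some $P_{j}\subseteq P$; combining with $P = \bigcap_{i}P_{i}\subseteq P_{j}$ and the maximality of $P_{j}$ gives $P = P_{j}$, and then $P\subseteq P_{i}$ with $P$ maximal and $P_{i}$ proper forces $P = P_{i}$ for every $i$. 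Since the $P_{i}$ are distinct, $k = 1$, so $I = P^{n}$ with $n = e_{1}\geq 1$.

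The second step then invokes a previously proved result: $R$ is a Dedekind domain, hence a Prüfer domain; $\sqrt{I} = P$; and $P$ is finitely generated since $R$ is Noetherian; so $I = P^{n}$ shows that condition (3) of Theorem \ref{T10} holds, and therefore $I$ is a $1$-absorbing primary ideal of $R$ by Theorem \ref{T10}. One could equally observe that $P^{n}$ is $P$-primary in a Dedekind domain by \cite[Exercise 2, p. 144]{LM} and that every primary ideal is $1$-absorbing primary directly from the definitions.

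I do not anticipate a real obstacle. The only step needing care is the radical computation $\sqrt{P_{1}^{e_{1}}\cdots P_{k}^{e_{k}}} = P_{1}\cap\cdots\cap P_{k}$ together with the observation that a prime ideal cannot coincide with an intersection of two or more distinct maximal ideals; these force $I$ to be a single prime power, after which the conclusion is just an appeal to Theorem \ref{T10} (or to the standard structure of primary ideals in a Dedekind domain).
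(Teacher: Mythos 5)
Your proof is correct, but it takes a longer route than the paper in the converse direction. The paper's argument is: since $I$ is nonzero, $\sqrt{I}$ is a nonzero prime ideal, hence a maximal ideal of the Dedekind domain $R$; an ideal whose radical is maximal is primary (a standard fact), so $I$ is primary and therefore $1$-absorbing primary. You instead invoke the factorization $I = P_{1}^{e_{1}}\cdots P_{k}^{e_{k}}$ into powers of distinct maximal ideals, compute $\sqrt{I}=P_{1}\cap\cdots\cap P_{k}$, and use primeness of $\sqrt{I}$ to force $k=1$, so that $I=P^{n}$; you then conclude via Theorem \ref{T10} or via the $P$-primariness of $P^{n}$. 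Both arguments are sound and both reduce to ``$I$ is primary, hence $1$-absorbing primary.'' The paper's version is shorter and avoids the factorization machinery; yours buys the stronger structural conclusion $I=P^{n}$ as a byproduct, which is essentially the content the paper defers to Theorem \ref{T12}(1)$\Rightarrow$(2). The only point to watch in your write-up is to note explicitly that $\sqrt{I}$ is nonzero (it contains the nonzero ideal $I$), which is what makes it maximal rather than merely prime; you do implicitly handle this by taking all $P_{i}$ nonzero.
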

\begin{proof}
If $I$ is a 1-absorbing primary ideal of $R$, then $\sqrt{I}$ is a prime ideal of $R$ by Theorem \ref{T-1}. Conversely, suppose$\sqrt{I}$ is a prime ideal of $R$. Since $R$ is a Dedekind domain, it is well-known that every nonzero prime ideal of $R$ is a maximal ideal of $R$. Thus $\sqrt{I}$
is a maximal ideal of $R$. Hence $I$ is a primary ideal of $R$, and thus $I$ is 1-absorbing primary ideal of $R$.
\end{proof}

\begin{theorem}\label{T12}
Let $R$ be a Noetherian integral domain that is
not a field. Then the following statements are equivalent:
\begin{enumerate}
\item $R$ is a Dedekind domain.
\item A nonzero proper ideal $I$ of $R$ is a 1-absorbing primary ideal of $R$
if and only if $I=P^{n}$ for some prime ideal $P$ of $R$ some positive
integer $n$.
\end{enumerate}
\end{theorem}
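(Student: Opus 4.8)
The plan is to establish the two implications separately, drawing on the results already proved for Dedekind domains in one direction and localizing at an arbitrary maximal ideal in the other.

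\emph{The implication $(1)\Rightarrow(2)$.} Assume $R$ is a Dedekind domain and let $I$ be a nonzero proper ideal of $R$. Suppose first that $I$ is a $1$-absorbing primary ideal. By Theorem~\ref{T-1}, $\sqrt I$ is a prime ideal, and it is nonzero because $0\neq I\subseteq\sqrt I$; since every nonzero prime ideal of a Dedekind domain is maximal, $P:=\sqrt I$ is a maximal ideal. An ideal whose radical is maximal is primary, so $I$ is a $P$-primary ideal, and in a Dedekind domain the $P$-primary ideals are precisely the powers $P^n$ (equivalently, use the unique factorization of ideals into primes, or see \cite{LM}); hence $I=P^n$ for some $n\geq 1$. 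Conversely, if $I=P^n$ with $P$ a prime ideal and $n\geq 1$, then $P$ is a nonzero prime and $\sqrt I=P$ is prime, so $I$ is a $1$-absorbing primary ideal of $R$ by Theorem~\ref{T11}.

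\emph{The implication $(2)\Rightarrow(1)$.} Since every primary ideal is a $1$-absorbing primary ideal, hypothesis~(2) forces every nonzero primary ideal of $R$ to be of the form $P^n$. Fix a maximal ideal $M$ of $R$; since $R$ is not a field, $M\neq 0$ and hence $R_M$ is a Noetherian local domain with $MR_M\neq 0$. The first step is to show that every $MR_M$-primary ideal $Q$ of $R_M$ is a power of $MR_M$: its contraction $Q\cap R$ is a primary ideal of $R$ with $\sqrt{Q\cap R}=\sqrt Q\cap R=M$, hence $Q\cap R$ is a nonzero proper $M$-primary ideal, so by hypothesis (comparing radicals shows the relevant prime must be $M$) $Q\cap R=M^n$ for some $n\geq 1$; therefore $Q=(Q\cap R)R_M=(MR_M)^n$. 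The second step produces a principal generator of $MR_M$: by Nakayama's lemma $MR_M\neq (MR_M)^2$, so choose $x\in MR_M\setminus(MR_M)^2$; the ideal $(x)+(MR_M)^2$ has radical $MR_M$, hence is $MR_M$-primary and therefore equals $(MR_M)^k$ for some $k\geq 1$, and since it is not contained in $(MR_M)^2$ we must have $k=1$, that is $MR_M=(x)+(MR_M)^2$, whence $MR_M=(x)$ by Nakayama. A Noetherian local domain with nonzero principal maximal ideal is a DVR (by the Krull intersection theorem every nonzero element is a unit times a power of $x$), so $R_M$ is a DVR; as this holds for every maximal ideal $M$ and $R$ is a Noetherian domain, $R$ is a Dedekind domain.

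The manipulations with radicals and with the extension--contraction of ideals under localization are routine. I expect the \textbf{main obstacle} to be the second step of $(2)\Rightarrow(1)$: converting the hypothesis ``every nonzero primary ideal is a prime power'' into the principality of $MR_M$. The crux is that $(x)+(MR_M)^2$ is a prime power strictly larger than $(MR_M)^2$, which leaves $MR_M$ as the only option, and this is exactly the place where the primary-ideal hypothesis (together with Nakayama) does the real work.
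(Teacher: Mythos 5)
Your proof is correct. The direction $(1)\Rightarrow(2)$ follows the paper's argument essentially verbatim: $\sqrt I$ is prime by Theorem \ref{T-1}, hence maximal since $R$ is Dedekind, hence $I$ is primary and therefore a power of its radical; the converse is immediate. For $(2)\Rightarrow(1)$, however, you take a genuinely different route. The paper observes that any ideal $J$ with $M^2\subseteq J\subseteq M$ has radical $M$, hence is primary, hence $1$-absorbing primary, hence by hypothesis equal to $M^n$ with $n\in\{1,2\}$; so no ideal lies strictly between $M^2$ and $M$, and it then invokes \cite[Theorem 39.2]{Gilmer} to conclude that $R$ is Dedekind. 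You instead localize at each maximal ideal $M$, show that every $MR_M$-primary ideal of $R_M$ is a power of $MR_M$ by contracting to $R$ and applying the hypothesis, use the Nakayama argument on $(x)+(MR_M)^2$ for $x\in MR_M\setminus(MR_M)^2$ to get $MR_M$ principal, and conclude that $R_M$ is a DVR for all $M$, hence $R$ is Dedekind. Your version is longer but self-contained modulo only standard facts about localization, Nakayama, and the local characterization of Dedekind domains, whereas the paper's is shorter at the cost of citing Gilmer's ``no ideal between $M^2$ and $M$'' criterion; in fact your Nakayama step is essentially an unpacking of the proof of that criterion. Both arguments are sound, and your identification of the $(x)+(MR_M)^2$ step as the crux is accurate.
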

\begin{proof}
{\bf $(1)\Rightarrow (2)$}. Let $R$ be a Dedekind domain and $I$ be a nonzero
proper ideal of $R.$ Suppose that $I$ is a 1-absorbing primary ideal of $R$. Then $\sqrt{I} = P$ is a nonzero prime ideal of $R$ by Theorem \ref{T-1}. Since $R$ is a Dedekind domain, it is known that every nonzero prime ideal of $R$ is a maximal ideal of $R$. Thus $\sqrt{I} = P$ is a maximal ideal of $R$. Thus $I$ is a primary ideal of $R$. Since $R$ is Dedekind  and $I$ is a primary ideal of $R$ with $\sqrt{I} = P$ is a maximal ideal of $R$, we conclude that $I = P^n$ for some $n\geq 1$ by  of $R$ by \cite[Theorem 6.20]{LM}. Conversely, suppose that $I = P^n$ for some nonzero proper ideal $P$ of $R$ and a positive integer $n\geq 1$. Since $R$ is Dedekind, we conclude that $P$ is a maximal ideal of $R$, and hence $I$ is a primary ideal of $R$. Thus $I$ is a 1-absorbing primary ideal of $R$.

{\it $(2)\Rightarrow (1)$}. Suppose that every nonzero 1-absorbing primary ideals of $R$ is
of type $I = P^{n}$ for some nonzero prime ideal $P$ of $R$ and a positive integer $n \geq 1$.
Let $M$ be a maximal ideal of $R$. Since every ideal between $M^2$ and $M$ is a primary ideal and hence a 1-absorbing primary ideal of $R$, we conclude that there is no primary ideals of $R$ between $M^2$ and $M$. Thus $R$ is a Dedekind domain by \cite[Theorem 39.2]{Gilmer}.
\end{proof}

Since every principal ideal domain is a Dedekind domain, we have the following result.

\begin{corollary}\label{C1}
Let $R$ be a principal ideal domain and $I$ be a nonzero proper ideal of $R$. Then
$I$ is a 1-absorbing primary ideal of $R$ if and only if $I=p^{n}R$ for some
nonzero prime element $p$ of $R$ and a positive integer $n\geq 1$.
\end{corollary}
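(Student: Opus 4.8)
The plan is to deduce this immediately from Theorem \ref{T12} together with the structure of ideals in a principal ideal domain. First I would dispose of the degenerate case: if $R$ is a field, then $R$ has no nonzero proper ideal, so the asserted equivalence holds vacuously. Hence we may assume that $R$ is a principal ideal domain that is not a field; in particular $R$ is a Noetherian integral domain that is not a field, and since every principal ideal domain is a Dedekind domain, Theorem \ref{T12} applies and its condition $(1)$ holds.

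Next, invoking the implication $(1)\Rightarrow(2)$ of Theorem \ref{T12}, a nonzero proper ideal $I$ of $R$ is a 1-absorbing primary ideal of $R$ if and only if $I=P^{n}$ for some prime ideal $P$ of $R$ and some positive integer $n\geq 1$. Since $I$ is nonzero, the prime ideal $P$ appearing here is necessarily nonzero, and in a principal ideal domain every nonzero prime ideal has the form $P=pR$ for some nonzero prime element $p$ of $R$; conversely $pR$ is a nonzero prime ideal for every prime element $p$. Because $(pR)^{n}=p^{n}R$, the condition ``$I=P^{n}$ for some nonzero prime ideal $P$ and some $n\geq 1$'' is equivalent to ``$I=p^{n}R$ for some nonzero prime element $p$ of $R$ and some $n\geq 1$''. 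Combining this with the equivalence furnished by Theorem \ref{T12} gives the claim.

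There is essentially no hard step; the only points needing a word of care are the field case (handled above by vacuity) and the translation between nonzero prime ideals and prime elements in a principal ideal domain, which is standard. If a self-contained argument were preferred, one could avoid Theorem \ref{T12} entirely: if $I=p^{n}R$ then $I$ is a power of a maximal ideal, hence primary, hence 1-absorbing primary; and conversely, if $I$ is 1-absorbing primary then $\sqrt{I}$ is a nonzero prime ideal $pR$ by Theorem \ref{T-1}, which is maximal since $R$ is a principal ideal domain, so $I$ is $pR$-primary and therefore equals $(pR)^{n}=p^{n}R$ for some $n\geq 1$ by the classification of primary ideals in a Dedekind domain.
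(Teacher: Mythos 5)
Your proposal is correct and matches the paper's intended derivation: the paper states the corollary as an immediate consequence of the fact that every principal ideal domain is a Dedekind domain, i.e.\ via Theorem \ref{T12}, which is exactly the route you take (with the field case and the translation between nonzero prime ideals and prime elements spelled out, as they should be).
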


In light of Theorem \ref{T2} and Example \ref{e1}, observe that there are some rings whose 1-absorbing primary ideals are not of
the form $P^{n}$ for some prime ideal $P$ of $R$ and a positive integer $ n \geq 1$.

Let $I_{1}$ and $I_{2}$ be 1-absorbing primary ideals of $R$. If $\sqrt{I_{1}}\neq \sqrt{I_{2}}$, then $I_{1}\cap I_{2}$ needs not to be a 1-absorbing primary ideal
of $R$. We have the following example.
\begin{example}
Let $R = \mathbb{Z} \times \mathbb{Z}$. Then $I_1 = 4\mathbb{Z} \times \mathbb{Z}$ and $I_2 = \mathbb{Z} \times 9\mathbb{Z}$ are 1-absorbing primary ideals of $R$. Also,  $\sqrt{I_1} = 2\mathbb{Z} \times \mathbb{Z}$ and $\sqrt{I_2} = \mathbb{Z} \times 3\mathbb{Z}$. Hence $\sqrt{I_1} \not = \sqrt{I_2}$ and $I_1 \cap I_2 = 4\mathbb{Z} \times 9\mathbb{Z}$ is not a 1-absorbing primary ideal of $R$ by Theorem \ref{T1.5}
\end{example}

\begin{definition}
Let $I$ be a 1-absorbing primary ideal of a ring $R$. Then $\sqrt{I}=P$ is a prime ideal of $R$ by Theorem \ref{T-1}. Hence we call $I$
a $P$-1-absorbing primary ideal of $R$.
\end{definition}

So we have the following result.

\begin{theorem}\label{T13}
Let $I_{1},I_{2},...,I_{n}$ be $P$-1-absorbing primary ideals of a ring $R$. Then $
I = \bigcap_{i=1}^{n}I_{i}$ is a $P$-1-absorbing primary ideal of $R.$
\end{theorem}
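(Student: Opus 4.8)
The plan is to collapse everything to a single constituent ideal using the common radical. First I would record the radical computation $\sqrt{I} = \sqrt{\bigcap_{i=1}^{n} I_{i}} = \bigcap_{i=1}^{n} \sqrt{I_{i}} = \bigcap_{i=1}^{n} P = P$, which is a prime ideal of $R$ by hypothesis. This already pins down the prospective radical and shows it is prime; in particular $\sqrt{I} = P \neq R$ forces $I \neq R$, so $I$ is proper. Consequently, once $I$ is shown to be a 1-absorbing primary ideal, it is automatically a $P$-1-absorbing primary ideal in the sense of the definition preceding the statement.

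Next I would verify the defining condition directly, for all $n$ at once. Suppose $abc \in I$ for some nonunit elements $a, b, c \in R$, and suppose $c \notin \sqrt{I} = P$. Fix an index $i \in \{1, \dots, n\}$. Then $abc \in I_{i}$, the ideal $I_{i}$ is 1-absorbing primary, the elements $a, b, c$ are nonunit, and $c \notin \sqrt{I_{i}} = P$; hence $ab \in I_{i}$. Since $i$ was arbitrary, $ab \in \bigcap_{i=1}^{n} I_{i} = I$. Therefore $I$ is a 1-absorbing primary ideal of $R$, and together with the first paragraph it is a $P$-1-absorbing primary ideal of $R$.

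There is essentially no genuine obstacle; the only point deserving explicit mention is the standard identity $\sqrt{\bigcap_{i} I_{i}} = \bigcap_{i} \sqrt{I_{i}}$, which gives both that $\sqrt{I} = P$ and that $\sqrt{I}$ is prime. One could alternatively argue by induction, treating the case $n = 2$ and iterating via $I_{1} \cap \cdots \cap I_{n} = (I_{1} \cap \cdots \cap I_{n-1}) \cap I_{n}$, but the direct computation above handles the general case with no extra work.
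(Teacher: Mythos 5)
Your proof is correct and follows essentially the same route as the paper: both arguments reduce the claim to a single constituent ideal by exploiting the common radical $\sqrt{I_i}=P$, differing only in which disjunct is negated (you assume $c\notin P$ and conclude $ab\in I_i$ for every $i$; the paper assumes $ab\notin I$, picks one $I_1$ with $ab\notin I_1$, and concludes $c\in P$). Your explicit verification that $\sqrt{I}=\bigcap_i\sqrt{I_i}=P$ is a welcome addition, as the paper merely asserts this.
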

\begin{proof}
First observe that $\sqrt{I} = P$. Suppose that $abc\in I$ for some nonunit elements $a,b,c\in R$ and $%
ab\notin I$. Without loss of generality, we may assume that $ab\notin
I_{1}.$ Since $I_{1}$ is a $P$-1-absorbing primary ideal of $R$ and $ab\notin I_1$, we have $c\in P$.
\end{proof}

\begin{theorem}\label{T14}
Let $R_{1}$ and $R_{2}$ be  rings and $
f:R_{1}\rightarrow R_{2}$ be a ring homomorphism such that $f(1) = 1$ and if $R_2$ is a quasilocal ring, then $f(a)$ is a nonunit of $R_2$ for every nonunit $a \in R_1$. Then the following statements hold.
\begin{enumerate}
\item Assume that $J$ is a 1-absorbing primary ideal of $R_{2}$, then $f^{-1}(J)$ is
a 1-absorbing primary ideal of $R_{1}$.
\item If $f$ is onto and $I$ is a 1-absorbing primary ideal of $R_{1}$ with $Ker(f) \subseteq I,$ then $f(I)$ is a 1-absorbing primary ideal of $R_{2}$.
\end{enumerate}
\end{theorem}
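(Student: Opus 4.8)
The plan is to treat the two statements separately; in each the structural bookkeeping (that the relevant set is a proper ideal, and that radicals transfer correctly along $f$) is routine, and the real content is a short chase of a triple $abc$ through $f$ or $f^{-1}$.

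For (1), I would first observe that $f^{-1}(J)$ is a proper ideal of $R_1$ since $f(1)=1\notin J$, and that $\sqrt{f^{-1}(J)}=f^{-1}(\sqrt J)$, so that $f(c)\in\sqrt J$ exactly when $c\in\sqrt{f^{-1}(J)}$. Then I would split on whether $R_2$ is quasilocal. If it is not, Theorem \ref{T1} tells us $J$ is already a primary ideal of $R_2$; since the preimage of a primary ideal under a ring homomorphism is primary (a standard fact), $f^{-1}(J)$ is primary and hence $1$-absorbing primary. If $R_2$ is quasilocal, take nonunit $a,b,c\in R_1$ with $abc\in f^{-1}(J)$; by the standing hypothesis each of $f(a),f(b),f(c)$ is a nonunit of $R_2$, and $f(a)f(b)f(c)=f(abc)\in J$, so the $1$-absorbing primary property of $J$ gives $f(ab)\in J$, i.e.\ $ab\in f^{-1}(J)$, or $f(c)\in\sqrt J$, i.e.\ $c\in\sqrt{f^{-1}(J)}$.

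For (2), since $f$ is onto, $f(I)$ is an ideal of $R_2$, and it is proper because $f(I)=R_2$ would force $1-i\in\ker f\subseteq I$ for some $i\in I$, hence $1\in I$. Given nonunit $a',b',c'\in R_2$ with $a'b'c'\in f(I)$, use surjectivity to write $a'=f(a)$, $b'=f(b)$, $c'=f(c)$; each of $a,b,c$ is then a nonunit of $R_1$, since a unit of $R_1$ maps to a unit of $R_2$. From $f(abc)\in f(I)$ and $\ker f\subseteq I$ we get $abc\in I$, so the $1$-absorbing primary property of $I$ yields $ab\in I$, whence $a'b'=f(ab)\in f(I)$, or $c\in\sqrt I$, whence $c'^{\,n}=f(c^{n})\in f(I)$ for a suitable $n$ and so $c'\in\sqrt{f(I)}$.

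I do not expect a serious obstacle. The two points needing care are: invoking the standing hypothesis precisely in the quasilocal case of (1), where it is exactly what keeps nonunits of $R_1$ from becoming units in $R_2$ (in (2) this is automatic from $f(1)=1$ and multiplicativity); and remembering to use $\ker f\subseteq I$ in (2) to pass from $f(abc)\in f(I)$ to $abc\in I$. Splitting (1) through Theorem \ref{T1} is what lets us sidestep the hypothesis entirely when $R_2$ is not quasilocal.
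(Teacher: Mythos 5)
Your proof is correct and follows essentially the same route as the paper's: in (1) you split on whether $R_2$ is quasilocal, using the nonunit-preserving hypothesis in the quasilocal case and reducing to the primary case via Theorem \ref{T1} otherwise, and in (2) you lift nonunits through the surjection and use $\ker f\subseteq I$ exactly as the paper does. The only differences are minor extra checks (properness of $f^{-1}(J)$ and $f(I)$, and the observation that preimages of nonunits under a surjection with $f(1)=1$ are nonunits) that the paper leaves implicit.
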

\begin{proof}
{\bf (1)}. Assume that $R_2$ is quasilocal. Let $abc\in f^{-1}(J)$ for some nonunit elements $a,b,c\in R_{1}$. Then $
f(abc)=f(a)f(b)f(c)\in J$ (note that $f(a), f(b)$, and $f(c)$ are nonunit elements of $R_2$ by hypothesis), which implies $f(a)f(b)\in J$ or $f(c)\in \sqrt{J}
$. It follows $ab\in f^{-1}(J)$ or $c\in \sqrt{f^{-1}(J)}=f^{-1}(\sqrt{J}).$
Thus $f^{-1}(J)$ is a 1-absorbing primary ideal of $R_{1}.$ Suppose that $R_2$ is not a quasilocal ring and $J$ is a 1-absorbing primary ideal of $R_{2}$. Then $J$ is a primary ideal of $R_2$ by Theorem 3. Thus $f^{-1}(J)$ is a primary ideal of $R_1$, and hence $f^{-1}(J)$ is
a 1-absorbing primary ideal of $R_{1}$.

{\bf (2)}. Since $f$ is onto and $Ker(f) \subseteq I$, we know that $f(\sqrt{I}) = \sqrt{f(I)}$. Let $xyz\in f(I)$ for some nonunit elements $x,y,z\in R_{2}$. Since $f$ is onto, there exist nonunit elements $a,b,c\in R_1$ such that $x = f(a),$ $y = f(b),$ and $ z = f(c)$. Hence $f(abc)=f(a)f(b)f(c)=xyz\in f(I)$. Since $Ker(f) \subseteq I$, we conclude that $abc\in I$. Hence $ab\in I$ or $c\in \sqrt{I}$; so $xy\in f(I)$ or $z\in f(\sqrt{I}) = \sqrt{f(I)}$. Thus $f(I)$ is a 1-absorbing primary ideal of $R_{2}$.
\end{proof}

In view of Theorem \ref{T14}, we have the following result.

\begin{corollary}\label{C2}
Let $I$ and $J$ be proper ideals of a ring $R$ with $I\subseteq J$ and suppose that if $R/I$ is a quasilocal ring, then $a + I$ is a nonunit of $R/I$ for every nonunit $a\in R$. Then $J$
is a 1-absorbing primary ideal of $R$ if and only if  $J/I$ is a 1-absorbing primary
ideal of $R/I$.
\end{corollary}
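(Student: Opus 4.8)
The plan is to deduce Corollary \ref{C2} directly from Theorem \ref{T14} by taking $f$ to be the canonical surjection $\pi\colon R \to R/I$. First I would record that $\pi$ satisfies the hypotheses of Theorem \ref{T14}: we have $\pi(1) = 1$ in $R/I$, and the standing assumption of the corollary says precisely that if $R/I$ is quasilocal, then $\pi(a) = a + I$ is a nonunit of $R/I$ for every nonunit $a \in R$. Moreover $\mathrm{Ker}(\pi) = I \subseteq J$, which is the other hypothesis needed for part (2) of Theorem \ref{T14}.

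For the forward direction, suppose $J$ is a 1-absorbing primary ideal of $R$. Since $\pi$ is onto and $\mathrm{Ker}(\pi) = I \subseteq J$, part (2) of Theorem \ref{T14} applies and gives that $\pi(J) = J/I$ is a 1-absorbing primary ideal of $R/I$.

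For the converse, suppose $J/I$ is a 1-absorbing primary ideal of $R/I$. Apply part (1) of Theorem \ref{T14} with the 1-absorbing primary ideal $J/I$ of $R/I$: it yields that $\pi^{-1}(J/I)$ is a 1-absorbing primary ideal of $R$. Since $I \subseteq J$, we have $\pi^{-1}(J/I) = J$, so $J$ is a 1-absorbing primary ideal of $R$, completing the equivalence.

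The only point requiring a moment's care — and the closest thing to an obstacle — is checking that the hypothesis "if $R/I$ is quasilocal then $a+I$ is a nonunit of $R/I$ for every nonunit $a \in R$" is exactly what is needed to invoke Theorem \ref{T14} for the map $\pi$; this is immediate once one notes $R_2 = R/I$ and $R_1 = R$ in the notation of that theorem. Everything else is the routine identification $\pi^{-1}(J/I) = J$ and the observation $\mathrm{Ker}(\pi) = I \subseteq J$, so no substantial new argument is involved.
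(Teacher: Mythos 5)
Your proposal is correct and follows exactly the paper's own argument: apply Theorem \ref{T14}(2) to the canonical surjection $\pi\colon R\to R/I$ for the forward direction and Theorem \ref{T14}(1) for the converse, after checking $\pi(1)=1$, $\mathrm{Ker}(\pi)=I\subseteq J$, and the nonunit hypothesis. No further comment is needed.
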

\begin{proof}
Let $f :  R \rightarrow R/I$ such that $f(a) = a + I$. Then $f$ is a ring homomorphism from $R$ onto $R/I$ and $f(1) = 1$. Suppose that $J$
is a 1-absorbing primary ideal of $R$. Since $Ker(f) = I \subseteq J$ and $f$ is onto, we conclude that $f(J) = J/I$ is a 1-absorbing primary ideal of $R/I$ by Theorem \ref{T14}(2). Suppose that $J/I$ is a 1-absorbing primary ideal of $R/I$. Then $f^{-1}(J/I) = J$ is a 1-absorbing primary ideal of $R$ by Theorem \ref{T14}(1).
\end{proof}

\begin{theorem}\label{T15}
Let $S$ be a multiplicatively closed subset of $R,$ and $I$ be a proper
ideal of $R$ \ Then the following statements hold.
\end{theorem}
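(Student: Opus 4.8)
The plan is to prove the expected localization statements for 1-absorbing primary ideals: first, that if $I$ is a 1-absorbing primary ideal of $R$ and $S\cap I=\emptyset$, then $S^{-1}I$ is a 1-absorbing primary ideal of $S^{-1}R$; and second, a converse/contraction statement valid under the hypothesis $S\cap Z_{I}(R)=\emptyset$. Two elementary facts will be used throughout. The first is that radicals commute with localization, and that $S\cap I=\emptyset$ forces $S\cap\sqrt{I}=\emptyset$ (a power of an element of $S$ that lies in $I$ would itself lie in $S\cap I$), so $\sqrt{S^{-1}I}=S^{-1}\sqrt{I}$ is a proper prime ideal of $S^{-1}R$ by Theorem \ref{T-1} and $S^{-1}I$ is a proper ideal. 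The second is that if $a$ is a nonunit of $R$ then $av$ is a nonunit of $R$ for every $v\in S$, and if $a/s$ is a nonunit of $S^{-1}R$ then $a$ is a nonunit of $R$ (otherwise $a/s=(a/1)(1/s)$ would be a product of units of $S^{-1}R$).

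For the first statement I would verify the definition directly; the essential point is that clearing denominators keeps the computation inside $R$, so no case analysis on the quasilocality of $S^{-1}R$ is needed. Suppose $\alpha\beta\gamma\in S^{-1}I$ for nonunit elements $\alpha=a/s$, $\beta=b/t$, $\gamma=c/u$ of $S^{-1}R$. Then $v\,abc\in I$ for some $v\in S$, and by the second fact the elements $va$, $b$, $c$ are nonunits of $R$. Since $(va)(b)(c)\in I$ and $I$ is a 1-absorbing primary ideal, either $(va)b\in I$, whence $\alpha\beta=(vab)/(vst)\in S^{-1}I$, or $c\in\sqrt{I}$, whence $\gamma=c/u\in S^{-1}\sqrt{I}=\sqrt{S^{-1}I}$. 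Thus $S^{-1}I$ is a 1-absorbing primary ideal of $S^{-1}R$.

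For the converse, the hypothesis $S\cap Z_{I}(R)=\emptyset$ yields the contraction identity $(S^{-1}I)\cap R=I$, and likewise $c\in\sqrt{I}$ whenever $cw\in\sqrt{I}$ for some $w\in S$. Given nonunit elements $a,b,c$ of $R$ with $abc\in I$, I push to $S^{-1}R$. If $a/1$, $b/1$, $c/1$ are all nonunits of $S^{-1}R$, then the 1-absorbing primary property of $S^{-1}I$ gives $ab/1\in S^{-1}I$, hence $ab\in(S^{-1}I)\cap R=I$, or $c/1\in\sqrt{S^{-1}I}=S^{-1}\sqrt{I}$, hence $c\in\sqrt{I}$, and we are done. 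If instead one of $a/1,b/1,c/1$ is a unit of $S^{-1}R$---say $a/1$, so $ar=v\in S$ for some $r\in R$---then $v\,bc=r(abc)\in I$ with $v\in S\setminus Z_{I}(R)$ forces $bc\in I$, and one is left with a two-factor relation from which the desired alternative does not automatically follow; it is exactly here that the precise hypothesis of the theorem (a further condition on $S$, or the assumption that $R$ is quasilocal so that Theorems \ref{T0} and \ref{T1} apply) is invoked.

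The step I expect to be the main obstacle is this converse direction. In contrast with $2$-absorbing primary ideals, the definition of a 1-absorbing primary ideal is entangled with quasilocality, and a nonunit of $R$ may become a unit of $S^{-1}R$, so contraction does not interact smoothly with the defining condition---without an appropriate extra hypothesis one can have $S^{-1}I$ a 1-absorbing primary ideal of $S^{-1}R$ while $I$ fails to be a 1-absorbing primary ideal of $R$. Identifying that hypothesis, together with the bookkeeping of when $R$ and $S^{-1}R$ are quasilocal---in particular the observation that if $R$ is quasilocal with maximal ideal $M$ and $S^{-1}R$ is not quasilocal then $S\cap M\neq\emptyset$, so $S^{-1}M=S^{-1}R$---is the crux of the argument.
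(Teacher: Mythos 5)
Your argument for part (1) is correct and is essentially the paper's own proof: clear denominators to get $vabc\in I$ for some $v\in S$, absorb $v$ into one of the first two factors, and apply the definition of a 1-absorbing primary ideal to $(va)\cdot b\cdot c$. The only addition is your (correct) remark that a nonunit $a/s$ of $S^{-1}R$ forces $a$ to be a nonunit of $R$, and that $I\cap S=\emptyset$ keeps $S^{-1}I$ proper; the paper leaves both points implicit.

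For part (2) you did not produce a proof, but the obstacle you isolated is precisely where the paper's own proof breaks down: the paper applies the defining property of $S^{-1}I$ to the fractions $\frac{a}{1},\frac{b}{1},\frac{c}{1}$ without checking that they are nonunit elements of $S^{-1}R$, and a nonunit of $R$ can become a unit after localization. Your suspicion that an extra hypothesis is needed is justified; in fact part (2) is false as stated. Take $R=K[x,y]$, $M=(x,y)$, $I=xM=(x^{2},xy)$, and $S=R\setminus M$. Since $xM=(x)\cap M^{2}$, the associated primes of $I$ are $(x)$ and $M$, so $Z_{I}(R)=(x)\cup M=M$ and hence $S\cap Z_{I}(R)=\emptyset$; moreover $S^{-1}I=xM_{M}$ is a 1-absorbing primary ideal of $R_{M}$ by Theorem \ref{T2} (this is Example \ref{e1}). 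Yet $I$ is not a 1-absorbing primary ideal of $R$: the ring $R$ is not quasilocal and $I$ is not primary (as $xy\in I$, $x\notin I$, $y\notin\sqrt{I}=(x)$), so Theorem \ref{T1} applies; concretely, $(1+y)\cdot x\cdot y\in I$ while $(1+y)x\notin I$ and $y\notin\sqrt{I}$, the point being that the nonunit $1+y$ of $R$ becomes a unit of $R_{M}$. So the missing step is not yours to supply: part (2) needs an additional hypothesis, for instance that $\frac{a}{1}$ is a nonunit of $S^{-1}R$ for every nonunit $a\in R$ (the same condition imposed in Theorem \ref{T14}), under which the paper's contraction argument, and your version of it, goes through; without such a hypothesis one can only conclude from $S\cap Z_{I}(R)=\emptyset$ that $I=(S^{-1}I)\cap R$, which does not transfer the 1-absorbing primary property back to $R$.
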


\begin{enumerate}
\item If $I$ is a 1-absorbing primary ideal of $R$ such that $I\cap
S=\emptyset $, then $S^{-1}I\ $is a 1-absorbing primary ideal of $S^{-1}R.$

\item If $S^{-1}I$ is a 1-absorbing primary ideal of $S^{-1}R$ and $S\cap
Z_{I}(R)=\emptyset $, then $I\ $is a 1-absorbing primary ideal of $R.$
\end{enumerate}
\begin{proof}
(1) Let $\frac{a}{s_{1}}\frac{b}{s_{2}}\frac{c}{s_{3}}\in S^{-1}I$ for some
nonunit elements $a,b,c\in R,$ and $s_{1},s_{2},s_{3}\in S.$ Suppose that $\frac{a}{%
s_{1}}\frac{b}{s_{2}}\notin S^{-1}I$. Then $uabc\in I$ for some $u\in S$.
Since $I$ is 1-absorbing primary and $uab\notin I$, we have $c\in \sqrt{I}$.
Thus $\frac{c}{s_{3}}\in S^{-1}\sqrt{I}=\sqrt{S^{-1}I}$ which completes the
proof.

(2) Let $abc\in I$ for some nonunit elements $a,b,c\in R$. Hence $\frac{abc}{1}=%
\frac{a}{1}\frac{b}{1}\frac{c}{1}\in S^{-1}I$. Since $S^{-1}I$ is
1-absorbing primary, we have $\frac{a}{1}\frac{b}{1}\in S^{-1}I$ or $%
\frac{c}{1}\in \sqrt{S^{-1}I}=S^{-1}\sqrt{I}$. If $\frac{a}{1}\frac{b}{1}\in
S^{-1}I,$ then $uab\in I$ for some $u\in S.$ Since $u\notin Z_{I}(R)$, we
conclude that $ab\in I.$ If $\frac{c}{1}\in S^{-1}\sqrt{I},$ then $%
(tc)^{n}\in I$ for some positive integer $n$ and $t\in S$. Since $%
t^{n}\notin Z_{I}(R)$, we have $c^{n}\in I,$ as needed.
\end{proof}

\begin{theorem}\label{T16}
Let $I$ be a 1-absorbing primary ideal of a commutative ring $R.$ If $%
abJ\subseteq I$ for some nonunit elements $a,b\in R$ and a proper ideal $J$ of $R$%
, then $ab\in I$ or $J\subseteq \sqrt{I}.$
\end{theorem}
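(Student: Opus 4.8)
The statement is an element-wise consequence of the definition of a 1-absorbing primary ideal, together with the elementary observation that a proper ideal contains no units. The plan is to assume $ab \notin I$ and show directly that every element of $J$ lies in $\sqrt{I}$.

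First I would record the key point about $J$: since $J$ is a proper ideal of $R$, it contains no unit of $R$ (if $u \in J$ were a unit, then $1 = u^{-1}u \in J$, forcing $J = R$). Hence every element $c \in J$ is a nonunit of $R$.

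Next, suppose $ab \notin I$ and fix an arbitrary $c \in J$. By the previous paragraph $c$ is a nonunit, and by hypothesis $a$ and $b$ are nonunit elements of $R$, while $abc \in abJ \subseteq I$. Since $I$ is a 1-absorbing primary ideal of $R$ and $ab \notin I$, the definition forces $c \in \sqrt{I}$. As $c \in J$ was arbitrary, this gives $J \subseteq \sqrt{I}$, which is exactly the desired alternative. Therefore $ab \in I$ or $J \subseteq \sqrt{I}$.

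There is essentially no obstacle here: the only thing one must not overlook is the hypothesis that all three of $a$, $b$, $c$ be nonunits, and this is guaranteed for $a,b$ by assumption and for $c$ by the properness of $J$, so the definition of 1-absorbing primary applies verbatim.
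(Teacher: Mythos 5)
Your proof is correct and follows essentially the same route as the paper's: apply the definition of a 1-absorbing primary ideal to $a$, $b$, and an element of $J$ (the paper phrases it as a proof by contradiction, you phrase it directly, but the content is identical). You are in fact slightly more careful than the paper, since you explicitly justify that every element of the proper ideal $J$ is a nonunit, which is needed to invoke the definition.
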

\begin{proof}
Assume on the contrary that $abJ\subseteq I$, but $ab\not\in I$ and $%
J\not\subseteq \sqrt{I}.$ Then there exists an element $j \in J$ such that $j\not\in
\sqrt{I}$. Hence we have $abj\in I$, but neither $ab\in I$ nor $j\in \sqrt{I}$, a contradiction.
\end{proof}

\begin{theorem}\label{T17}
\label{I}Let $I$ be a proper ideal of $R$. Then the followings statements are
equivalent.
\begin{enumerate}
\item $I$ is a 1-absorbing primary ideal of $R$.
\item For any proper ideals $I_{1},I_{2},I_{3}$ of $R$ such that $%
I_{1}I_{2}I_{3}\subseteq I$ implies that either $I_{1}I_{2}\subseteq I$ or $%
I_{3}\subseteq \sqrt{I}.$
\end{enumerate}
\end{theorem}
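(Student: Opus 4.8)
The plan is to prove the two implications separately; the implication $(2)\Rightarrow(1)$ is immediate by specializing to principal ideals, and $(1)\Rightarrow(2)$ is the substantive direction, which I would derive from Theorem \ref{T16}.

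For $(2)\Rightarrow(1)$: suppose $a,b,c$ are nonunit elements of $R$ with $abc\in I$. Since a principal ideal $xR$ is proper exactly when $x$ is a nonunit, the ideals $aR$, $bR$, $cR$ are proper, and $(aR)(bR)(cR)=abcR\subseteq I$. Applying hypothesis (2) to $I_1=aR$, $I_2=bR$, $I_3=cR$ gives $abR=(aR)(bR)\subseteq I$ or $cR\subseteq\sqrt I$, i.e. $ab\in I$ or $c\in\sqrt I$, so $I$ is 1-absorbing primary.

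For $(1)\Rightarrow(2)$: assume $I$ is 1-absorbing primary and $I_1I_2I_3\subseteq I$ for proper ideals $I_1,I_2,I_3$, and suppose $I_3\not\subseteq\sqrt I$; I must show $I_1I_2\subseteq I$. Fix arbitrary $a\in I_1$ and $b\in I_2$. Since $I_1$ and $I_2$ are proper, $a$ and $b$ are nonunit elements of $R$, and $abI_3\subseteq I_1I_2I_3\subseteq I$. By Theorem \ref{T16} applied with the proper ideal $J=I_3$, we get $ab\in I$ or $I_3\subseteq\sqrt I$; the latter is excluded, so $ab\in I$. As $a\in I_1$ and $b\in I_2$ were arbitrary and $I_1I_2$ is generated as an ideal by such products $ab$ (and $I$ is an ideal), it follows that $I_1I_2\subseteq I$, as required.

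I do not anticipate a genuine obstacle here: the crucial point, which keeps the argument routine, is that every element of a proper ideal is automatically a nonunit, so the ``nonunit'' restriction in the definition of a 1-absorbing primary ideal is met for free once the elements are drawn from proper ideals $I_1,I_2,I_3$ — this is exactly why the ideal-theoretic characterization in (2) is stated with \emph{proper} ideals rather than arbitrary ones. The only small care needed is the order of quantification: one first fixes nothing about $I_3$ beyond $I_3\not\subseteq\sqrt I$, lets $a\in I_1$ and $b\in I_2$ range, and uses Theorem \ref{T16} once per pair $(a,b)$; equivalently, one may pick a single witness $c\in I_3\setminus\sqrt I$ and argue element-wise with $abc\in I$ and the definition directly.
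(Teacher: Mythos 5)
Your proposal is correct and follows essentially the same route as the paper: $(2)\Rightarrow(1)$ by specializing to the principal ideals $aR, bR, cR$ (proper because $a,b,c$ are nonunits), and $(1)\Rightarrow(2)$ via Theorem \ref{T16} applied to $abI_3\subseteq I$. The only difference is cosmetic — the paper argues the second implication contrapositively by choosing one witness pair $a\in I_1$, $b\in I_2$ with $ab\notin I$, whereas you let $(a,b)$ range and invoke Theorem \ref{T16} for each pair; the two are logically interchangeable.
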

\begin{proof}
{\bf $(1)\Rightarrow (2)$}. Let $I$ be a 1-absorbing primary ideal of $R$. Assume
that $I_{1}I_{2}I_{3}\subseteq I$ for some proper ideals $%
I_{1},I_{2},I_{3}$ of $R$ and $I_1I_2 \nsubseteq I$. Then there exist nonunit elements $a\in I_{1}$
and $b\in I_{2}$ such that $ab \notin I$. Since $abI_3 \subseteq I$ and $ab \not\in I$, we conclude that $J \subseteq \sqrt{I}$ by Theorem \ref{T16}.

{\bf $(2)\Rightarrow (1)$}.  Suppose that $abc\in I$ for some nonunit elements $a,b,c\in R$ and $ab\notin I$.
Let $I_1 = aR, I_2 = bR$, and $I_3 = cR$. Then $I_1I_2I_3 \subseteq I$ and $I_1I_2\nsubseteq I$. Thus $I_3 = cR \subseteq \sqrt{I}$. Hence $c\in \sqrt{I}$.
\end{proof}


\begin{thebibliography}{9}
\bibitem{AB} D. D. Anderson and M. Batanieh, Generalizations of prime
ideals, Comm. Algebra, \textbf{36} (2008), 686--696.

\bibitem{AndBad} D.F. Anderson, A. Badawi, On $n$-absorbing ideals of
commutative rings, Comm. Algebra, 39 (2011), 1646-1672.

\bibitem{Badawi} A.Badawi, On 2-absorbing ideals of commutative rings, Bull.
Austral. Math. Soc., 75 (2007), 417--429.

\bibitem{Badawi2} A. Badawi, \"{U}. Tekir and E. Yetkin, On $2$-absorbing
primary ideals in commutative rings, Bull. Korean Math. Soc., \textbf{51}
(4) (2014), 1163--1173.

\bibitem{Gilmer} R. Gilmer, Multiplicative ideal theory, Queen Papers Pure
Appl. Math. 90, Queen's University, Kingston, 1992.

\bibitem{Huc} J. Huckaba, Rings with zero-divisors, Marcel Dekker, NewYork/
Basil,1988.

\bibitem{Kap} I. Kaplansky, Commutative rings, rev. ed., University of
Chicago Press, Chicago, 1974.

\bibitem{LM}  M.D. Larson and P.J. McCarthy, Multiplicative theory of ideals, Academic Press, New
York, London, 1971.

\end{thebibliography}
\end{document}